\def\R{\mathbb{R}}
\def\N{\mathbb{N}}
\newcommand{\ds}{\displaystyle}
\newcommand{\T}{\mathcal{T}}
\newtheorem{theorem}{Theorem}[section]
\newtheorem{lemma}[theorem]{Lemma}
\newtheorem{corollary}[theorem]{Corollary}
\newtheorem{proposition}[theorem]{Proposition}
\def\lp{\left(}
\def\rp{\right)}
\definecolor{aquamarine}{rgb}{0.13, 0.68, 0.8}
\newcommand{\pth}[1]{\left(#1\right)}
\newcommand{\cro}[1]{\left[#1\right]}
\newcommand{\eps}{\varepsilon}
\newcommand{\toweak}{\rightharpoonup}
\newcommand{\mcl}{\mathcal{L}}
\newcommand{\mcc}{\mathcal{C}}
\begin{document}

\title{\bf{Periodic KPP equations: new insights into persistence, spreading, and the role of advection}} 
\date{}
\author{N. Boutillon$^{\footnotesize\hbox{a,b}}$, F. Hamel$^{\footnotesize\hbox{a}}$, L. Roques$^{\footnotesize\hbox{b}}$ \\
\\
\footnotesize{$^{\hbox{a }}$Aix Marseille Univ, CNRS, I2M, Marseille, France}\\
\footnotesize{$^{\hbox{b }}$INRAE, BioSP, 84914, Avignon, France}\\
}

\maketitle

\begin{abstract}
We focus on the persistence and spreading properties for a heterogeneous Fisher-KPP equation with advection. After reviewing the different notions of persistence and spreading speeds, we focus on the effect of the direction of the advection term, denoted by $b$. First, we prove that changing $b$ to $-b$ can have an effect on the spreading speeds and the ability of persistence. Next, we provide a class of relationships between the intrinsic growth term $r$ and the advection term $b$ such that changing $b$ to $-b$ does {\emph not} change the spreading speeds and the ability of persistence. We briefly mention the cases of slowly and rapidly varying environments, and bounded domains. Lastly, we show that in general, the spreading speeds are not controlled by the ability of persistence, and conversely. However, when there is no advection term, the spreading speeds are controlled by the ability of persistence,  though the converse still does not hold.
\end{abstract}

\noindent \textbf{Keywords:} KPP equation; advection term; principal eigenvalue; persistence; spreading speeds.

\medskip

\noindent \textbf{MSC 2020:} 35K57, 35B40, 92D25, 35Q92.

\tableofcontents


\section{Introduction \label{sec:intro}}

In this article, we focus on the persistence and spreading properties of an invasive species living in a heterogeneous periodic environment. We address the following questions:
\begin{enumerate}
\item Does the direction of an advection force influence the ability of persistence and the leftward and rightward spreading speeds of the species?
\item Does a better ability of persistence imply higher spreading speeds of the species?
\end{enumerate}

To answer these questions, we work in the formalism of reaction-diffusion equations in a heterogeneous periodic environment, with an advection term:
\begin{equation} \label{eq:KPP}
\partial_t u = \partial_{xx} u - \partial_x (b(x) u) + f(x,u), \quad t>0, \ x \in \mathbb{R},
\end{equation}
with periodic coefficients in $x$ and unknown function $u(t,x)\ge0$. Equations of this type are commonly considered to model the dynamical properties and propagation phenomena of a population, often assumed to be initially compactly supported, in an unbounded heterogeneous medium~\cite{BerHam02,CanCos03,RoqL13,Shikaw97,Wei02,Xin00,Zha17}. The first objective of this work is to analyze the effect of replacing $b$ with $-b$ in the advection term. In particular, a natural question is whether this modification affects the ability of persistence of the population. Due to the periodic nature of the problem, we will see that the answer is not straightforward and cannot be obtained through trivial reasoning. The second objective of this work is to study the relationship between the ability of persistence and the spreading speeds of the population, when the initial condition has a compact support.

Before proceeding further, let us clarify the assumptions and precisely define the concepts of persistence and spreading.

\subsubsection*{Main assumptions}

Throughout the article, the function $b$ is assumed to be $1$-periodic and of class $\mathcal{C}^{1,\alpha}(\R)$, for some $\alpha\in(0,1)$. We assume that the function $f: \mathbb{R} \times \mathbb{R}_+ \to \mathbb{R},\ (x,u)\mapsto f(x,u)$ is of class $\mathcal{C}^{0,\alpha}_{\mathrm{loc}}(\R\times\R_+)$, and that $\frac{\partial f}{\partial u}$ exists and is of class $\mathcal{C}^{0,\alpha}_{\mathrm{loc}}(\R\times\R_+)$. We also assume that $f$ is $1$-periodic with respect to $x$ and that
\begin{equation}\label{f0}
f(x,0)=0\ \hbox{ for all }x\in\mathbb{R}.
\end{equation}
We define, for $x\in\R$,
\begin{equation}
    r(x) := \left. \frac{\partial f}{\partial u}(x,u) \right|_{u=0}.
\end{equation}
The function $r$ is then $1$-periodic and of class $\mathcal{C}^{0,\alpha}(\R)$. Last, we assume, throughout the paper, that the following strong variant of the KPP (standing for Kolmogorov-Petrovsky-Piskunov) assumption holds:
\begin{equation}\label{f1}
\forall\,x \in \mathbb{R},\ u \mapsto \frac{f(x,u)}{u} \text{ is decreasing in } u > 0
\end{equation}
and
\begin{equation}\label{f2}
\exists\,M \ge 0,\ \forall\,u \ge M,\ \forall\,x \in \mathbb{R},\ f(x,u)-b'(x)u \le 0.
\end{equation}
These assumptions, together with the other one~\eqref{eq:persist_cond} below on the linear instability of the trivial state, will ensure the existence and uniqueness of a positive $1$-periodic steady state to~\eqref{eq:KPP}. When we see $u$ as a population density, the coefficient~$r(x)$ is interpreted as the intrinsic growth rate of the population. Examples of functions~$f$ satisfying the above assumptions include $f(x,u) = r (x)\, u (1-u)$ or $f(x,u) = u(r(x)-\gamma(x)u)$, where~$r$ and~$\gamma$ are $\mathcal{C}^{0,\alpha}(\R)$ $1$-periodic positive functions.

\subsubsection*{Characterization of persistence and spreading}

For this kind of reaction-diffusion equation where the KPP assumption is satisfied, it is well-known that the ability of persistence, as well as the asymptotic spreading speeds, for solutions that are initially bounded, continuous, non-negative and compactly supported, can be characterized by principal eigenvalues of elliptic operators with periodicity conditions. Namely, for $\lambda \in \mathbb{R}$, we introduce the operator:
\begin{equation}\label{defLlambda}
\mathcal{L}_\lambda[r;b] \, : \, \psi \mapsto \psi'' + (b+2 \lambda) \psi' + \left(\lambda^2 + \lambda \, b  + r \right)\psi,
\end{equation}
acting on $1$-periodic $\mathcal{C}^2$ functions. By the Krein-Rutman theorem, for each $\lambda\in\R$, there exists a unique $k_{\lambda}[r;b]\in\R$ such that there is a $1$-periodic function $\psi\in\mathcal{C}^{2,\alpha}(\R)$ satisfying
\begin{equation}\label{klambda}
\mathcal{L}_\lambda[r;b]\,\psi =k_{\lambda}[r;b]\,\psi\ \hbox{ in }\R,\ \ \ \psi>0\ \hbox{ in }\R.
\end{equation}
We say that $k_{\lambda}[r;b]$ is the principal eigenvalue of $\mathcal{L}_\lambda[r;b]$. We point out that $\mcl_{0}[r;b]$ is the adjoint of the operator
\[\mcl_0[r;b]^*:\psi \mapsto \psi'' -  (b\psi)' + r\psi,\]
still acting on $1$-periodic $\mathcal{C}^2$ functions, and which is obtained from the linearization of the right-hand side of~\eqref{eq:KPP} around the trivial state~$0$. In particular, $\mcl_{0}[r;b]$ and $\mcl_0[r;b]^*$ have the same principal eigenvalue $k_0[r;b]$.

The concept of \emph{persistence} refers to a scenario where, given any function~$u_0$ that is bounded, continuous, non-negative, and not identically equal to zero, the solution~$u$ to the Cauchy problem~\eqref{eq:KPP} with $u(0,\cdot)=u_0$ satisfies
\begin{equation}\label{defpersistence}
{\limsup_{t\to+\infty}\|u(t,\cdot)\|_{L^\infty(\R)}>0}.
\end{equation}
The concept of persistence is directly opposed to the concept of \emph{extinction}, which means that the solution~$u$ converges uniformly to $0$ at $t\to+\infty$. Actually, due to~\eqref{f0} and~\eqref{f1}, it follows from~\cite[Theorem~2.1 and Remark~3.1]{BerHamRoq05a} that, if $k_0[r;b]\le0$, then any solution~$u$ of~\eqref{eq:KPP} with a bounded continuous non-negative initial condition~$u_0$ goes to extinction as $t\to+\infty$. On the other hand, the condition
\begin{equation} \label{eq:persist_cond}
k_{0}[r;b] > 0,
\end{equation}
together with~\eqref{f0} and~\eqref{f1}-\eqref{f2}, warrants the existence and uniqueness of a positive $1$-periodic steady state $p>0$ to~\eqref{eq:KPP}, by~\cite[Theorem~2.1 and Remark~3.1]{BerHamRoq05a} and~\cite[Lemma~3.5, Proposition~6.3 and Theorem~6.5]{Berhamros07}. Furthermore, when there is no advection ($b=0$), condition~\eqref{eq:persist_cond} implies that $u(t,\cdot)\to p$ as $t\to+\infty$ locally uniformly in $\R$, for any continuous non-negative compactly supported initial condition $u_0\not\equiv0$ (by~\cite[Theorem~2.6]{BerHamRoq05a}). In the general case $b\not\equiv0$, the condition for local convergence to $p$ must be adapted. Namely, if $k_0[r-b'/2-b^2/4;0]>0$, then~\eqref{eq:persist_cond} holds and $u(t,\cdot)\to p$ as $t\to+\infty$ locally uniformly in $\R$ (see Proposition~\ref{ppn:persistence_general_b} in Appendix~\ref{s:proof_persistence_general_b}).

More generally speaking, by~\cite[Theorems~2.2,~2.3]{Wei02}, condition~\eqref{eq:persist_cond} implies persistence of the solution~$u$ to~\eqref{eq:KPP}, in the sense of~\eqref{defpersistence}, with any continuous non-negative compactly supported initial condition $u_0\not\equiv0$.\footnote{Persistence then holds even if the initial condition is not compactly supported, by the comparison principle.} Moreover, under condition~\eqref{eq:persist_cond}, there are two {\it spreading speeds}~$c_\pm[r;b]\in\R$,
defined by
\begin{equation}\label{defcpm}\left\{\begin{array}{l}
c_+[r;b]:=\inf\big\{c\in\R \ /\ \forall\,w>c,\, u(t,\, wt)\to 0 \text{ as $t\to+\infty$}\big\},\vspace{3pt}\\
c_-[r;b]:=\inf\big\{c\in\R \ /\ \forall\,w>c,\, u(t,\, -wt)\to 0 \text{ as $t\to+\infty$}\big\}.\end{array}\right.
\end{equation}
The two spreading speeds~$c_\pm[r;b]$ are independent of $u_0$, and satisfy $-c_-[r;b]<c_+[r;b]$, and
$$\forall\,w\le w'\in(-c_-[r;b],c_+[r;b]),\quad\max_{wt\le x\le w't}|u(t,x)-p(x)|\to0\hbox{ as $t\to+\infty$}.$$
When condition~\eqref{eq:persist_cond} is not fulfilled, that is, when $k_0[r;b]\le0$, then we set $c_\pm[r;b]:=-\infty$, which is coherent with~\eqref{defcpm} since $\|u(t,\cdot)\|_{L^\infty(\R)}\to0$ as $t\to+\infty$ in this case.

When the two speeds $c_-[r;b]$ and $c_+[r;b]$ are positive, $c_+[r;b]$ is known as the \emph{rightward spreading speed} and may be interpreted as follows: an observer moving rightwards at a speed greater than $c_+[r;b]$ will observe the population density tend to zero. Conversely, an observer moving rightwards at a nonnegative speed smaller than $c_+[r;b]$ will observe a population density that remains above a positive level.
Analogously, $c_-[r;b]$ is called the \emph{leftward spreading speed} and can be interpreted in the same way with \enquote{left} instead of \enquote{right}. The positivity of both speeds $c_-[r;b]$ and $c_+[r;b]$ is guaranteed for instance when $b=0$ and $k_0[r;0]>0$ and, more generally speaking, there even holds
\begin{equation}\label{c-=c+}
c_-[r;b]=c_+[r;b]>0\quad \hbox{ when }\int_0^1b=0\hbox{ and $k_0[r,b]>0$},
\end{equation}
see~\cite[Proposition~3.9]{BKR25}. We point out that such an equality does not hold for bistable-type equations, for which the leftward and rightward spreading speeds are not equal in general, even without advection term~\cite{DinGil21}. 

The concept of spreading speed is somewhat subtle for general periodic advection terms $b$, as in our setting. In particular, we can have $c_-[r;b]<0$ or $c_+[r;b]<0$. Indeed, for instance, if $r$ is equal to a positive constant $r_0$ and if $b$ is equal to a constant~$b_0$, then
\begin{equation}\label{eq:c+-}
c_+[r_0;b_0]=b_0+2\sqrt{r_0}\ \hbox{ and }\ c_-[r_0;b_0]=-b_0+2\sqrt{r_0},
\end{equation}
as immediately follows from~\cite{AroWei78}. In one of the situations $c_-[r;b]<0$ or $c_+[r;b]<0$, it means that a compactly supported initial datum gives rise to a solution to~\eqref{eq:KPP} which does not propagate to the left (if $c_-[r;b]<0$) or to the right (if $c_+[r;b]<0$), see Figure~\ref{fig:explanation_c_pm}. 
\begin{figure}[h]
\begin{center}
\includegraphics[width=12cm]{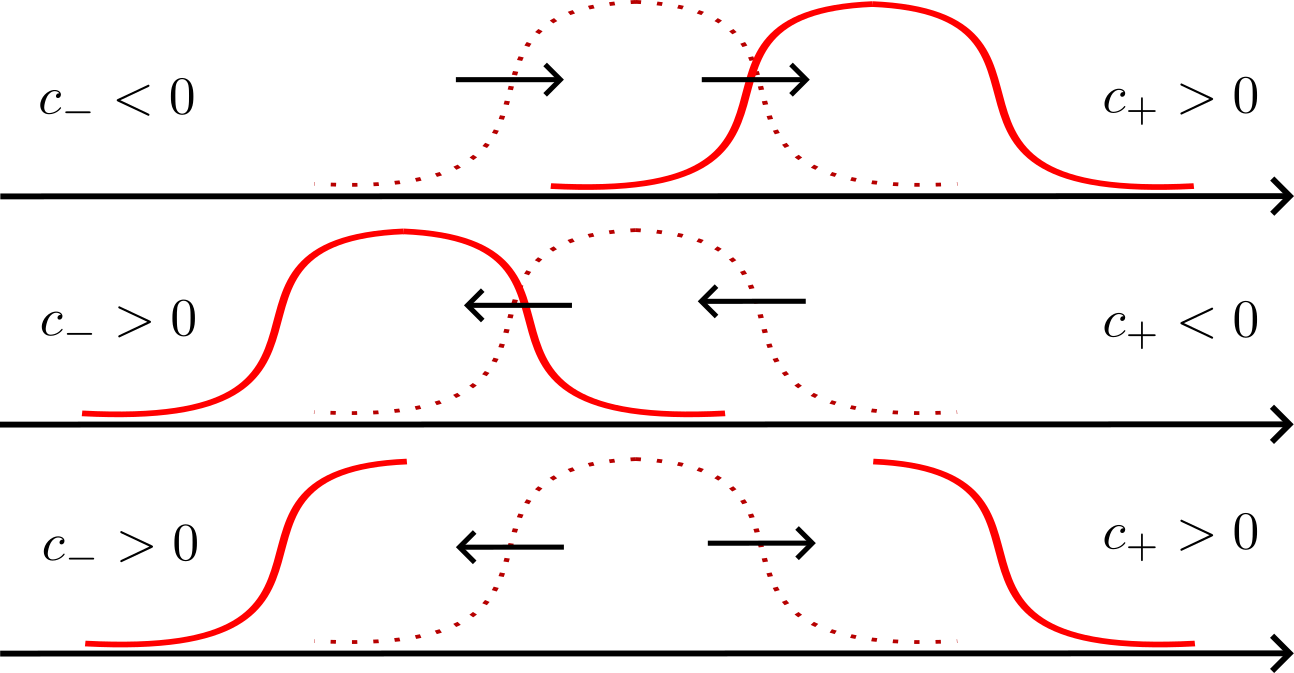}
\end{center}
\caption{\label{fig:explanation_c_pm}Three different situations. In each case, the dotted plot is the initial configuration and the full-line plot is taken at a later time.}
\end{figure}

As a matter of fact, the two formulas~\eqref{eq:c+-} when~$r=r_0$ and~$b=b_0$ are constant are particular cases of more general so-called Freidlin-Gärtner formulas. More precisely, by~\cite[Theorems~2.4,~2.5]{Wei02} and~\cite[Theorem~1.14]{Berhamnad08}, the spreading speeds~$c_+[r;b]$ and~$c_-[r;b]$ are characterized by:
\begin{equation}\label{eq:garfre}
c_+[r;b] = \inf_{\lambda > 0} \frac{k_{\lambda}[r;b]}{\lambda},\qquad c_-[r;b] = \inf_{\lambda > 0} \frac{k_{-\lambda}[r;b]}{\lambda},
\end{equation}
where we recall that, for $\lambda\in\R$, $k_\lambda[r;b]$ denotes the principal periodic eigenvalue of the operator $\mcl_\lambda[r;b]$ given in~\eqref{defLlambda}-\eqref{klambda}. Furthermore, as in the proof of~\cite[Proposition~5.7]{BerHam02}, the function $\lambda\mapsto k_\lambda[r;b]$ is convex in $\R$, whence continuous in $\R$, and, from~\eqref{klambda} applied at maximum or minimum points of a principal eigenfunction $\psi$, it follows that
\begin{equation}\label{kbornes}
\min_\R\,(\lambda^2+\lambda b+r)\le k_\lambda[r;b]\le\max_\R\,(\lambda^2+\lambda b+r).
\end{equation}
Therefore, $k_{\pm\lambda}[r;b]/\lambda\to+\infty$ as $\lambda\to+\infty$ and, together with~\eqref{eq:persist_cond}, it follows that both infima in~\eqref{eq:garfre} are reached. The Freidlin-Gärtner formulas~\eqref{eq:garfre}, which were first obtained in~\cite{GarFre79} in the case $b=0$, will be central to this work, as they imply that studying the principal eigenvalues of the operators $\mcl_{\lambda}[r;b]$ is sufficient to study the spreading speeds. We lastly mention that other formulas of the spreading speeds have been given for equations in higher dimensional media~\cite{BerHamNad05d,Fre84,GarFre79,Wei02} or with other types of reactions~$f$~\cite{Ros17} .


\section{Main results}

Section~\ref{sec21} is devoted to the study of the effect of changing $b$ to $-b$ on the ability of persistence, measured by $k_0[r;b]$, and the spreading speeds $c_\pm[r;b]$. Section~\ref{sec22} is concerned with the link between the ability of persistence and the spreading speeds, and in particular on the non-monotone relationship between these two concepts. Some of the results focus on $c_+[r;b]$ only, but analogous results would also hold true with~$c_-[r;b]$.


\subsection{The influence of inverting the direction of the advection}\label{sec21}

We start by addressing our first main question, that is, the effect of the direction of the advection term on the long-time dynamics of~\eqref{eq:KPP}. Namely, for fixed $1$-periodic functions $r\in\mcc^{0,\alpha}(\R)$ and $b\in\mcc^{1,\alpha}(\R)$, we compare $k_0[r;b]$ with $k_0[r;-b]$, and $c_\pm[r;b]$ with $c_\pm[r;-b]$. Although this problem seems natural, it appears that it has not been  investigated until now. We first show that changing $b$ to $-b$ may or may not change the ability of persistence (measured by $k_0[r;\pm b]$) and the spreading speeds. This may occur even if $b$ has zero mean. We next show that, on the contrary, the spreading speeds are not affected by the change of $b$ to $-b$ in the limit of rapidly or slowly oscillating media if $b$ has zero mean. We lastly investigate the effect of the change of $b$ to $-b$ on the ability of persistence in bounded domains with Dirichlet boundary conditions. 

\subsubsection*{General results}

We first show that in general, changing $b$ to $-b$ really has an influence on the ability of persistence and the spreading speeds in general.

\begin{proposition}\label{prop:k0neq}
For every $1$-periodic function $b\in\mcc^{1,\alpha}(\R)$ with $b\not\equiv 0$ and $\ds\int_0^1b=0$, there exists a $1$-periodic function $r\in\mcc^{0,\alpha}(\R)$ such that
$$k_0[r;b]\neq k_0[r;-b],\ \ c_+[r;b]\neq c_+[r;-b],\ \hbox{ and }\ c_+[r;b]\neq c_-[r;-b].$$
\end{proposition}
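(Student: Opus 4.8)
The natural strategy is to make the asymmetry between $b$ and $-b$ as stark as possible by choosing $r$ so that one of the two operators $\mathcal{L}_0[r;b]$, $\mathcal{L}_0[r;-b]$ has positive principal eigenvalue while the other has negative (or at least strictly smaller) principal eigenvalue; then $k_0[r;b] \neq k_0[r;-b]$ is immediate, and $c_+[r;\pm b]$ and $c_-[r;-b]$ will also differ because the spreading speeds are controlled via the Freidlin–Gärtner formulas~\eqref{eq:garfre} by the family $k_\lambda$, and the endpoint value $k_0$ enters decisively (indeed, when $k_0 \le 0$ one even has $c_\pm = -\infty$). The cleanest way to realize this is to exploit the fact that $k_0[r;b]$ depends on $b$ in a genuinely non-symmetric way once we drop the zero-advection case, even though $\int_0^1 b = 0$. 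A convenient handle is the self-adjoint reduction mentioned in the excerpt: by the substitution $\psi = e^{-\int b/2}\varphi$ (a standard gauge transform), $k_0[r;b]$ equals the principal eigenvalue of the Schrödinger-type operator $\varphi \mapsto \varphi'' + (r - b'/2 - b^2/4)\varphi$ on $1$-periodic functions — wait, this shows $k_0[r;b] = k_0[r;-b]$ for the $b^2$ part but the $b'/2$ term flips sign. So in fact $k_0[r;b] = k_0[r - b'/2 - b^2/4; 0]$ and $k_0[r;-b] = k_0[r + b'/2 - b^2/4; 0]$, and these two differ precisely when the zeroth-order potentials $r - b'/2$ and $r + b'/2$ produce different principal eigenvalues of the one-dimensional periodic Schrödinger operator.

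So the first step is to record this gauge identity carefully (it follows from conjugating $\mathcal{L}_0[r;b]$, and I would double-check the sign bookkeeping on the first-order coefficient $b + 2\lambda$ at $\lambda = 0$). The second step is to choose $r$. Write $q_\pm = r - b^2/4 \mp b'/2$; I want the periodic Schrödinger operators $\varphi'' + q_\pm \varphi$ to have distinct principal eigenvalues. Since $b \not\equiv 0$ and $\int_0^1 b = 0$, the function $b'$ is a nonzero $1$-periodic function with zero mean, and it is not identically zero, so $b'/2$ and $-b'/2$ are genuinely different potentials. The principal eigenvalue of a periodic Schrödinger operator is not determined by the mean of the potential alone (it is strictly concave as a function of the potential in the appropriate sense, or more simply: adding a mean-zero non-constant perturbation strictly decreases or changes it in a way one can control). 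Concretely, pick any smooth $1$-periodic $r$; then $q_+$ and $q_-$ are two potentials differing by $b' \not\equiv 0$, hence by the strict monotonicity/analyticity of the principal eigenvalue in the potential one can perturb $r$ (e.g. replace $r$ by $r + s b'$ for a suitable small parameter $s$, or argue by the fact that $k_0[q;0]$ is real-analytic and non-constant along the segment from $q_-$ to $q_+$) to force $k_0[q_+;0] \neq k_0[q_-;0]$. The point to verify is that $k_0[q;0]$ genuinely varies along that segment, i.e. it is not constant — this follows because its derivative at a point $q$ in direction $h$ is $\int_0^1 h \varphi_q^2 / \int_0^1 \varphi_q^2$ with $\varphi_q > 0$ the principal eigenfunction, which cannot vanish identically for all $q$ on the segment when $h = b' \not\equiv 0$ and $\varphi_q > 0$.

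The third step is to upgrade $k_0[r;b] \neq k_0[r;-b]$ to the two spreading-speed inequalities. Arrange the choice so that, say, $k_0[r;b] > 0 > k_0[r;-b]$ — this is possible by first shifting $r$ by a constant to place $k_0[q_-;0]$ and $k_0[q_+;0]$ on opposite sides of $0$, using that both depend on the additive constant identically (shifting $r$ by a constant $C$ shifts both $q_\pm$ by $C$ and hence both eigenvalues by $C$), and that they are distinct by Step 2, so there is a window of constants separating them. Then $c_+[r;-b] = c_-[r;-b] = -\infty$ while $c_+[r;b]$ is a finite real number by the discussion after~\eqref{eq:garfre}, giving all three desired inequalities at once. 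The main obstacle I anticipate is purely the Step 2 verification that the periodic principal eigenvalue really is non-constant along the segment $q_- \to q_+$; everything else is bookkeeping. If one prefers to avoid the analyticity argument, an alternative for Step 2 is an explicit example: take $b$ given (mean zero, $\not\equiv 0$) and choose $r$ concentrated so that $q_+$ has a deep well where $q_-$ does not — but the perturbative argument is cleaner and works uniformly in $b$, which is what the statement requires ("for every $b$").
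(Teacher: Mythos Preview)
Your gauge-transform reduction (Step~1) is exactly the paper's starting point: since $\int_0^1 b=0$, one has
\[
k_\lambda[r;b]=k_\lambda\!\Big[r-\tfrac{b^2}{4}-\tfrac{b'}{2};0\Big],\qquad
k_\lambda[r;-b]=k_\lambda\!\Big[r-\tfrac{b^2}{4}+\tfrac{b'}{2};0\Big],
\]
so the question becomes comparing two Schr\"odinger eigenvalues whose potentials differ by $b'$.

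The gap is in Step~2. You assert that the directional derivative $\int_0^1 b'\varphi_q^2\big/\int_0^1\varphi_q^2$ ``cannot vanish identically for all $q$ on the segment'', but you do not prove this; and even if the map $t\mapsto k_0[q_-+t(q_+-q_-);0]$ is non-constant on $[0,1]$, its \emph{endpoint values} at $t=0$ and $t=1$ could still coincide. Your fallback of perturbing $r$ by $sb'$ can be made to work (e.g.\ by showing $s\mapsto k_0[q_0+sb';0]$ tends to $+\infty$ as $|s|\to\infty$, hence is not $1$-periodic in $s$), but this is not in your write-up.

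The paper bypasses this entirely with an explicit choice: take $r=\tfrac{b^2}{4}+\tfrac{b'}{2}+\eta$ for any $\eta>0$. Then the ``$+b$'' potential is the \emph{constant} $\eta$, so $k_\lambda[r;b]=\lambda^2+\eta$ exactly, while the ``$-b$'' potential is $\eta+b'$. Integrating the eigenvalue equation for the latter against $1/\psi$ over a period gives
\[
k_\lambda[r;-b]=k_\lambda[\eta+b';0]=\int_0^1\frac{(\psi')^2}{\psi^2}+\lambda^2+\eta\,>\,\lambda^2+\eta,
\]
the strict inequality holding because $b'$ (hence $\psi$) is non-constant. This yields $k_\lambda[r;-b]>k_\lambda[r;b]$ for \emph{every} $\lambda\in\R$, so $k_0[r;b]\neq k_0[r;-b]$ immediately, and since the infima in~\eqref{eq:garfre} are attained one gets $c_+[r;b]<c_+[r;-b]=c_-[r;-b]$ with both sides finite --- no need for your Step~3 detour through the $-\infty$ convention. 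The key insight you are missing is that choosing $r$ so that one of the reduced potentials is constant makes the comparison a one-line computation.
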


Let us now focus on the preservation of the ability of persistence and the spreading speeds upon a change of direction of the advection term. Our first result in this direction exhibits two classes of pairs $[r;b]$ such that these quantities are left unchanged.

\begin{theorem}\label{thm:equality}
Let $b\in \mathcal{C}^{1,\alpha}(\R)$ and $r\in\mcc^{0,\alpha}(\R)$ be $1$-periodic functions. Assume that
\begin{enumerate}[label=$(\roman*)$]
\item either there exists $x_0 \in \R$ such that $r(x)=r(x_0-x)$ and $b(x)=b(x_0-x)$ for all $x\in\R$,
\item or there exist $\beta\in\R$ and $\gamma\in\R$ such that $r=\beta+\gamma b$.
\end{enumerate}
Then
$$k_0[r;b]=k_0[r;-b]\ \hbox{ and }\ c_+[r;b]=c_-[r;-b].$$
\end{theorem}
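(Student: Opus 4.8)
The key observation is that both hypotheses are ways of forcing a symmetry between the operators $\mcl_\lambda[r;b]$ and $\mcl_{-\lambda}[r;-b]$ at the level of principal eigenvalues, after which the conclusion on $c_+$ versus $c_-$ follows immediately from the Freidlin--Gärtner formulas~\eqref{eq:garfre} and the conclusion on $k_0$ is the special case $\lambda=0$. So the plan is to prove the stronger statement that, under either $(i)$ or $(ii)$, one has $k_\lambda[r;b]=k_{-\lambda}[r;-b]$ for \emph{all} $\lambda\in\R$. Granting this, $c_+[r;b]=\inf_{\lambda>0}k_\lambda[r;b]/\lambda=\inf_{\lambda>0}k_{-\lambda}[r;-b]/\lambda=c_-[r;-b]$, and $k_0[r;b]=k_{-0}[r;-b]=k_0[r;-b]$ since $0$ is fixed by $\lambda\mapsto-\lambda$.

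For case $(i)$: let $\psi$ be a positive $1$-periodic principal eigenfunction for $\mcl_\lambda[r;b]$, so $\psi''+(b+2\lambda)\psi'+(\lambda^2+\lambda b+r)\psi=k_\lambda[r;b]\,\psi$. Define $\varphi(x):=\psi(x_0-x)$, which is again positive and $1$-periodic (periodicity is preserved because reflection and integer translation commute up to periodicity). Writing $\varphi'(x)=-\psi'(x_0-x)$ and $\varphi''(x)=\psi''(x_0-x)$, and substituting $x_0-x$ into the eigenvalue equation, one gets $\varphi''(x)-(b(x_0-x)+2\lambda)\varphi'(x)+(\lambda^2+\lambda b(x_0-x)+r(x_0-x))\varphi(x)=k_\lambda[r;b]\varphi(x)$. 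Now use the hypothesis $b(x_0-x)=b(x)$ and $r(x_0-x)=r(x)$: the equation becomes $\varphi''-(b+2\lambda)\varphi'+(\lambda^2+\lambda b+r)\varphi=k_\lambda[r;b]\varphi$, i.e. $\mcl_{-\lambda}[r;-b]\varphi=k_\lambda[r;b]\varphi$ after checking that $\mcl_{-\lambda}[r;-b]$ has coefficients $\psi\mapsto\psi''+(-b-2\lambda)\psi'+(\lambda^2+\lambda b+r)\psi$ (the first-order coefficient is $-b+2(-\lambda)=-(b+2\lambda)$ and the zeroth-order one is $(-\lambda)^2+(-\lambda)(-b)+r=\lambda^2+\lambda b+r$, which matches). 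By uniqueness in the Krein--Rutman characterization~\eqref{klambda}, $k_{-\lambda}[r;-b]=k_\lambda[r;b]$.

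For case $(ii)$, where $r=\beta+\gamma b$: I would exhibit an explicit conjugation. Try $\psi(x)=e^{\mu x}\phi(x)$ with $\phi$ $1$-periodic and $\mu$ a constant to be chosen, and observe that multiplying the eigenfunction by $e^{\mu x}$ transforms $\mcl_\lambda[r;b]$ into $\mcl_{\lambda+\mu}$ of a suitably shifted problem — more precisely, a direct computation shows $k_\lambda[r;b]$ depends on $(\lambda,r,b)$ only through the combination that appears in $\lambda^2+\lambda b+r$ together with the drift, and the relation $r=\beta+\gamma b$ lets one absorb the $b$-dependence. The cleanest route: show that $\mcl_\lambda[\beta+\gamma b;b]$ is conjugate, via $\psi\mapsto e^{\gamma x}\psi$ and the substitution $\lambda\mapsto-\lambda-\gamma$... \emph{[one should verify the exact shift]}; the point is that the map $\lambda\mapsto k_\lambda[\beta+\gamma b;b]$ and $\lambda\mapsto k_\lambda[\beta+\gamma b;-b]$ are obtained from one another by an affine change of the parameter $\lambda$ that fixes the structure, and in particular $k_\lambda[r;b]=k_{-\lambda}[r;-b]$ after identifying constants using $\int_0^1$ of the eigenfunction equation. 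I expect the main obstacle to be pinning down this affine change of variable in case $(ii)$ correctly — getting the signs and the role of $\gamma$ right — whereas case $(i)$ is essentially a bookkeeping exercise with the reflection $x\mapsto x_0-x$. Once $k_\lambda[r;b]=k_{-\lambda}[r;-b]$ for all $\lambda$ is established in both cases, both equalities in the theorem are immediate.
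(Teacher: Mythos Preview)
Your treatment of case~$(i)$ is correct and is exactly what the paper does: reflect the principal eigenfunction through $x_0$, use the symmetry hypotheses on $r$ and $b$, and conclude $k_\lambda[r;b]=k_{-\lambda}[r;-b]$ for all $\lambda$, from which both conclusions follow.

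Your proposal for case~$(ii)$, however, has a genuine gap: the conjugation $\psi\mapsto e^{\mu x}\psi$ cannot do what you want. A direct computation gives
\[
e^{-\mu x}\,\mcl_\lambda[r;b]\big(e^{\mu x}\phi\big)=\mcl_{\lambda+\mu}[r;b]\,\phi,
\]
so this move only translates the parameter $\lambda$ and leaves the advection $b$ untouched; no choice of $\mu$ (and in particular not $\mu=\gamma$ with the substitution $\lambda\mapsto-\lambda-\gamma$) will turn $b$ into $-b$. Likewise the adjoint does not help: one checks that $\mcl_\lambda[r;b]^*=\mcl_{-\lambda}[r;-b]-b'$, so the two periodic principal eigenvalues differ in general unless $b$ is constant (and indeed Proposition~\ref{prop:k0neq} in the paper shows they can differ for other choices of $r$). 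So there is no zeroth-order conjugation or adjoint trick available here.

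The paper's proof of~$(ii)$ is substantially more delicate and is the missing idea. One first reduces to $\beta=0$ and proves a lemma (Lemma~\ref{lem:sign_phi'+phi}) that, for any positive solution $\phi$ of $\mcl_0[\gamma b;b]\phi=k\phi$, the function $\phi'+\gamma\phi$ either vanishes identically or at most once; combined with periodicity and $\gamma\neq0$ this forces $\phi'+\gamma\phi$ to have a constant strict sign. One then builds by hand a positive $1$-periodic $\varphi$ solving $\mcl_0[\gamma b;-b]\varphi=k_0[\gamma b;b]\,\varphi$, of the form $\varphi=e^{g}(1+Ch)$ with $g'=(\gamma\phi'+k_0\phi)/(\phi'+\gamma\phi)$ and $h$ an explicit primitive; when $\int_0^1 b=0$ this collapses to $\varphi=(\phi'+\gamma\phi)\,e^{B}$ with $B'=b$, a genuinely \emph{first-order} relation between the two eigenfunctions, not a multiplicative one. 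This gives $k_0[\gamma b;b]=k_0[\gamma b;-b]$. The identity for all $\lambda$ is then obtained by rewriting
\[
k_\lambda[\beta+\gamma b;b]=k_0\big[(\gamma+\lambda)(b+2\lambda);\,b+2\lambda\big]+\beta-(\lambda^2+2\lambda\gamma)
\]
and applying the $k_0$ result with $\widetilde\gamma=\gamma+\lambda$, $\widetilde b=b+2\lambda$. The upshot is that case~$(ii)$ is not a bookkeeping exercise like~$(i)$; it needs this explicit eigenfunction construction (or something equivalent), which your proposal does not supply.
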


The first item of Theorem~\ref{thm:equality}  is due to a straightforward symmetry: letting $\psi$ be a positive principal eigenfunction of~\eqref{klambda} associated with $\mcl_{\lambda}[r;b]$, the evenness of $r$ and $b$ implies that the function $x\mapsto\psi(-x)$ is a positive eigenfunction associated with $\mcl_{-\lambda}[r;-b]$, which implies that $k_{\lambda}[r;b]=k_{-\lambda}[r;-b]$ for all $\lambda\in\R$, whence $c_+[r;b]=c_-[r;-b]$ by~\eqref{eq:garfre}.

The second item of Theorem~\ref{thm:equality} is unexpected and more involved. It implies in particular that, for constant $r=r_0$ and for any $1$-periodic~$b\in\mcc^{1,\alpha}(\R)$, we have $k_0[r_0;b]=k_0[r_0;-b]$, and $c_+[r_0;b]=c_-[r_0;-b]$. In the proof, under the conditions of item ($ii$), we will exhibit an explicit relationship between the principal eigenfunctions $\phi^+$ and $\phi^-$ of the operators $\mcl_{0}[r;b]$ and $\mcl_{0}[r;-b]$, respectively. For example, when~$b$ has zero mean, this relationship reads:
\[\phi^-(x)=\pth{(\phi^+)'(x)+\gamma\phi^+(x)}\exp\pth{\int_0^xb(y)dy}.\]
The general relationship for $1$-periodic functions $b\in\mcc^{1,\alpha}(\R)$, which may have nonzero mean, is rather involved. Some remarks about the origin of this relationship are given in Section~\ref{s:proof_equality} and in Appendix~\ref{s:construction_phi}. See also Figure~\ref{fig:phi+_phi-} (top panel) for a numerical illustration of the relationship between $\phi^+$ and $\phi^-$ when item~$(ii)$ of Theorem~\ref{thm:equality} is satisfied.

\begin{figure}[h]
\begin{center}
\includegraphics[width=12cm]{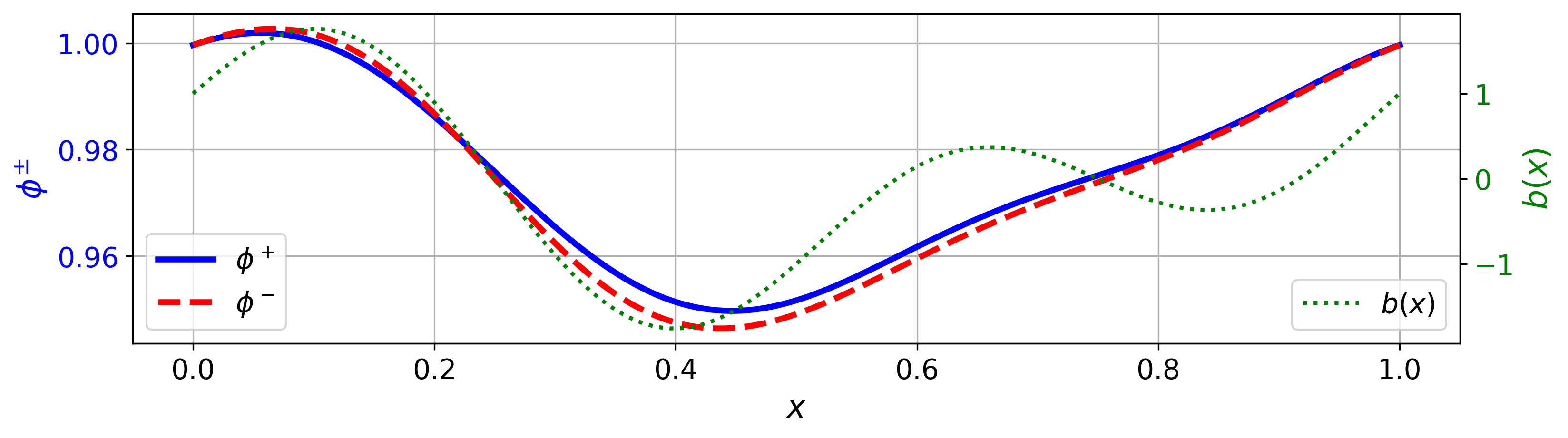}
\includegraphics[width=12cm]{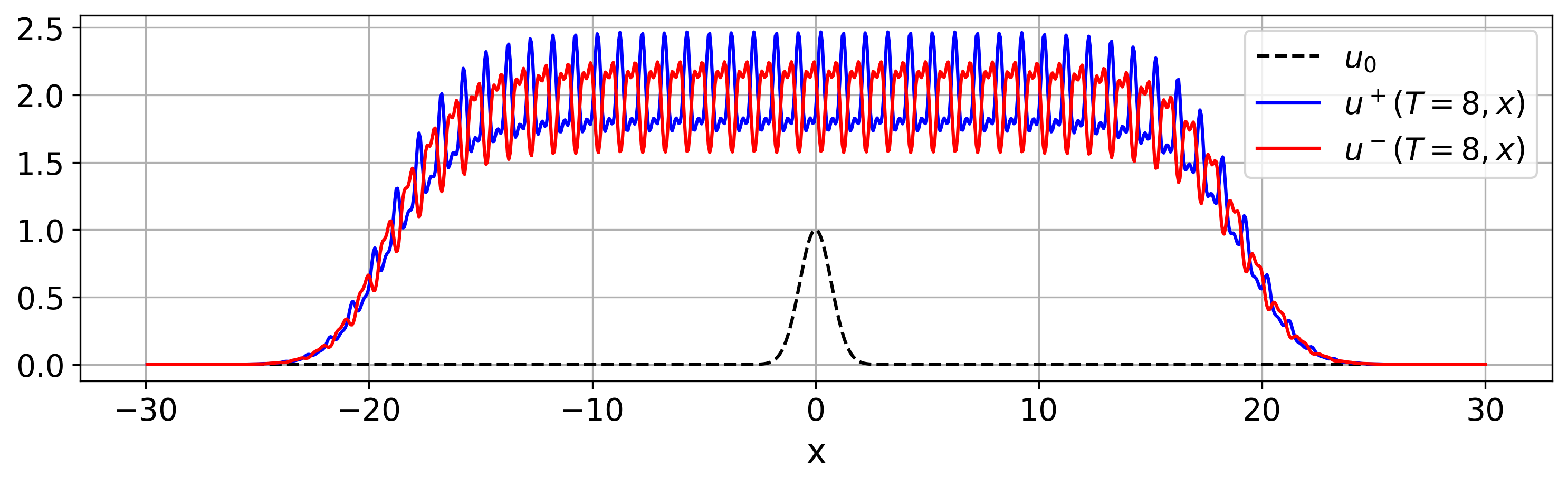}
\includegraphics[width=12cm]{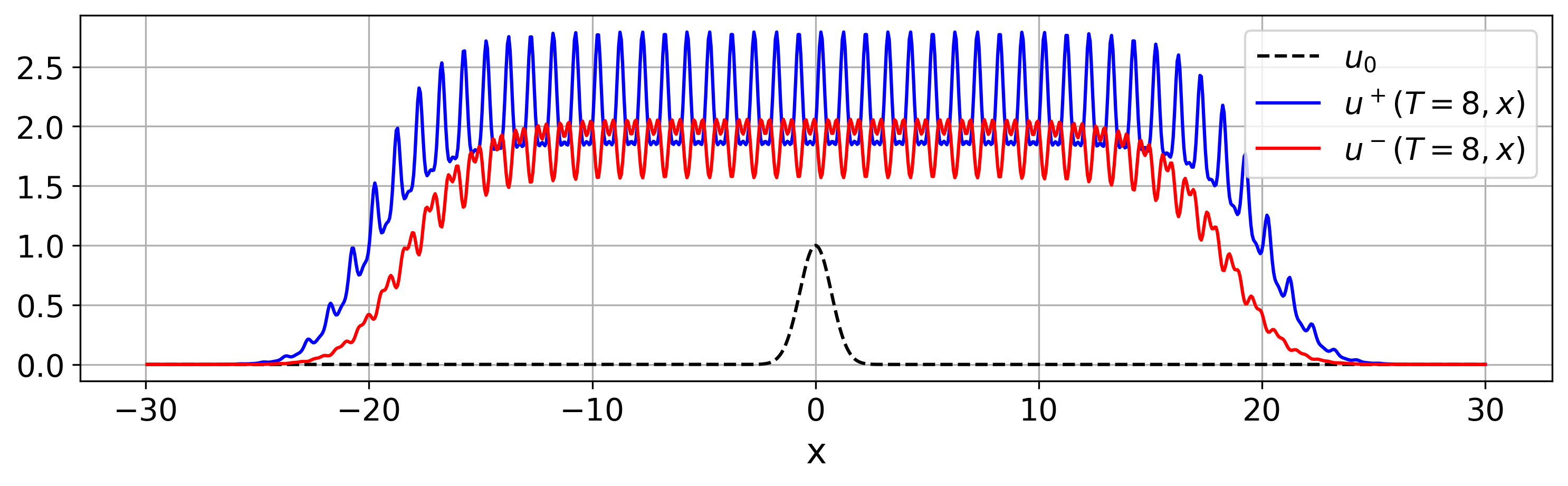}
\end{center}
\caption{\label{fig:phi+_phi-}
The top panel illustrates the nontrivial relationship between $\phi^+$ and $\phi^-$ when item~$(ii)$ of Theorem~\ref{thm:equality} is satisfied: $b(x)=\cos(2\pi x) + \sin(4 \pi x)$ and $r(x)= 2 + b(x)$. The middle panel shows the solutions $u^+ := u[r; b]$ and $u^- := u[r; -b]$ of \eqref{eq:KPP} with the same coefficients as in the top panel. The bottom panel shows the solutions $u^+ := u[r; b]$ and $u^- := u[r; -b]$ of \eqref{eq:KPP} with $b(x) = \cos(2\pi x) + \sin(4 \pi x)$ and $r(x) = 2 + 2 \sin (2 \pi x)$, not satisfying condition~$(ii)$ of Theorem~\ref{thm:equality}.}
\end{figure}

When $b$ has zero mean, the leftward and rightward spreading speeds $c_\pm[r;b]$ coincide, see~\eqref{c-=c+} when $k_0[r;b]>0$, while they are both equal to $-\infty$ when $k_0[r;b]\le0$. Therefore, a consequence of Theorem~\ref{thm:equality} is the following corollary.

\begin{corollary}\label{cor:equality}
Let $b\in \mathcal{C}^{1,\alpha}(\R)$ and $r\in\mcc^{0,\alpha}(\R)$ be $1$-periodic functions. Assume that $\ds\int_0^1b=0$ and that either $(i)$ or $(ii)$ of Theorem~$\ref{thm:equality}$ holds. Then
$$k_0[r;b]=k_0[r;-b]\ \hbox{ and }\ c_+[r; b]=c_+[r; -b]=c_-[r;- b]=c_-[r; b].$$
\end{corollary}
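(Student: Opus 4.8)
The plan is to deduce Corollary~\ref{cor:equality} directly from Theorem~\ref{thm:equality} together with the already-established facts about spreading speeds when $b$ has zero mean. The structure is short because all the analytic work has been done: Theorem~\ref{thm:equality} gives $k_0[r;b]=k_0[r;-b]$ under hypothesis $(i)$ or $(ii)$, so the first claimed equality is immediate and requires no further argument; it remains only to chain the four spreading speeds together.

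First I would record the dichotomy. Since $\int_0^1 b=0$ and, by Theorem~\ref{thm:equality}, $k_0[r;b]=k_0[r;-b]$, either both principal eigenvalues are $\le 0$ or both are $>0$. In the first case, by the convention stated after~\eqref{defcpm}, all four speeds $c_\pm[r;b]$ and $c_\pm[r;-b]$ equal $-\infty$, so the asserted string of equalities holds trivially. In the second case, both $k_0[r;b]>0$ and $k_0[r;-b]>0$, and I apply~\eqref{c-=c+}: since $b$ has zero mean, $c_-[r;b]=c_+[r;b]>0$, and since $-b$ also has zero mean and $k_0[r;-b]>0$, $c_-[r;-b]=c_+[r;-b]>0$. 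This already gives $c_+[r;b]=c_-[r;b]$ and $c_+[r;-b]=c_-[r;-b]$.

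It then suffices to connect the two pairs. Theorem~\ref{thm:equality} supplies $c_+[r;b]=c_-[r;-b]$, and combining this with the two equalities from~\eqref{c-=c+} yields the full chain $c_+[r;b]=c_+[r;-b]=c_-[r;-b]=c_-[r;b]$, as claimed. So the proof is essentially: invoke Theorem~\ref{thm:equality}, split on the sign of $k_0[r;b]$, and in the positive case paste together the Freidlin--Gärtner consequence~\eqref{c-=c+} applied to both $b$ and $-b$.

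There is no real obstacle here; the only point that needs a sentence of care is verifying that one is entitled to use~\eqref{c-=c+} for $-b$, which is immediate since $\int_0^1(-b)=-\int_0^1 b=0$ and $k_0[r;-b]>0$ holds in the relevant case. One might also remark, for completeness, that the hypotheses $(i)$ and $(ii)$ of Theorem~\ref{thm:equality} are each compatible with $\int_0^1 b=0$ (for $(ii)$, note $r=\beta+\gamma b$ imposes no constraint forcing $\int_0^1 b\neq 0$), so the corollary is not vacuous; but this is not strictly needed for the proof itself.
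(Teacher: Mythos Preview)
Your proof is correct and follows essentially the same approach as the paper: the paper derives the corollary in one sentence just before stating it, by combining Theorem~\ref{thm:equality} with~\eqref{c-=c+} (applied to both $b$ and $-b$) and the $-\infty$ convention when $k_0\le 0$, exactly as you do. Your treatment is simply a more explicitly written-out version of that argument.
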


In particular, Theorem~\ref{thm:equality} and Corollary~\ref{cor:equality} imply that, for a constant intrinsic growth term~$r$, the direction of an advection term with zero mean has no influence on the spreading speeds---even this is not at all trivial without any symmetry assumption.

As an illustration of Proposition~\ref{prop:k0neq}, Theorem~\ref{thm:equality}, and Corollary~\ref{cor:equality}, we numerically computed the values of $k_0[r;b]$, $k_0[r;-b]$, and $c_+[r;b]$, $c_+[r;-b]$ for different coefficients~$r$ and $b$, either satisfying or violating condition~$(ii)$ of Theorem~\ref{thm:equality}. Specifically, we took $b(x)=\cos(2\pi x) + \sin(4 \pi x)$. Then, with $r(x)= 2 + b(x)$, we found that $k_0[r;b] = k_0[r;-b] \approx 2.016$ and $c_+[r;b] = c_+[r;-b] \approx 2.819$. In contrast, when  $r(x)= 2  +  2\, \sin(2 \pi x)$, condition~$(ii)$ was not met, yielding $k_0[r;b] \approx 2.221$ and $k_0[r;-b] \approx 1.892$, while $c_+[r;b] \approx 2.933$ and $c_+[r;-b] \approx 2.745$. The corresponding solutions of the Cauchy problem associated with \eqref{eq:KPP}, with a compactly supported initial condition $u_0$, at the fixed time $T=8$, are depicted in Figure~\ref{fig:phi+_phi-} (middle and bottom panels). These numerical computations were performed using the Readi2Solve toolbox~\cite{readi2solve} (see \href{https://readi2solve.biosp.org/}{readi2solve.biosp.org}).

\subsubsection*{Slow or fast oscillations}

To state our second result about the effect of the direction of the advection term on the long-time dynamics of~\eqref{eq:KPP}, we vary the spatial period of the underlying medium and we consider rapidly or slowly oscillating media. To do so, we introduce further notations: for fixed $1$-periodic functions $r\in\mcc^{0,\alpha}(\R)$ and $b\in\mcc^{1,\alpha}(\R)$, we let, for any $L>0$,
\[x\mapsto r_L(x):=r\pth{\frac{x}{L}}\ \hbox{ and }\ x\mapsto b_L(x):=b\pth{\frac{x}{L}}\]
be the $L$-periodic versions of $r$ and $b$. The principal eigenvalues $k_{\lambda}[r_L;b_L]$ are still defined as in~\eqref{klambda}, by the Krein-Rutman theorem, and they are now associated with $L$-periodic principal eigenfunctions. The spreading speeds $c_\pm[r_L;b_L]$ are defined as in~\eqref{eq:garfre} but with $k_{\pm\lambda}[r_L;b_L]$ instead of $k_{\pm\lambda}[r;b]$.

The following result shows in particular that, in rapidly oscillating media ($L\to0$) and in slowly oscillating media ($L\to+\infty$), changing $b_L$ to $-b_L$ does not affect the limiting spreading speeds if $b$ has zero mean.

\begin{proposition}\label{ppn:limits_L}
If $\ds\int_0^1r>0$, then
$$\lim_{L\to0}k_0[r_L;b_L]=\lim_{L\to0}k_0[r_L;-b_L]=\ds\int_0^1r>0$$
and
\begin{equation}\label{limL0}
c_+[r_L;b_L]-c_+[r_L;-b_L]\mathop{\longrightarrow}_{L\to0}\ 2\int_0^1b\quad\text{and}\quad \lim_{L\to0}c_+[r_L;b_L]=\lim_{L\to0}c_-[r_L;-b_L].
\end{equation}
If $\ds\int_0^1b=0$ and $\ds\max_{\R}(r-b^2/4)>0$, then
$$\ds\lim_{L\to+\infty}k_0[r_L;b_L]=\lim_{L\to+\infty}k_0[r_L;-b_L]=\max_{\R}\Big(r-\frac{b^2}{4}\Big)>0$$
and
\[\lim_{L\to+\infty}c_+[r_L;b_L]=\lim_{L\to+\infty}c_+[r_L;-b_L]=\lim_{L\to+\infty}c_-[r_L;b_L]=\lim_{L\to+\infty}c_-[r_L;-b_L].\]
\end{proposition}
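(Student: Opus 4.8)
\emph{Set-up: rescaling.} The two limits require different reductions but share one device: substituting $x=Ly$ in \eqref{klambda} sends an $L$-periodic principal eigenfunction of $\mcl_\lambda[r_L;b_L]$ to a $1$-periodic principal eigenfunction of $\mcl_{L\lambda}[L^2r;Lb]$, whence
$$k_\lambda[r_L;b_L]=\frac1{L^2}\,k_{L\lambda}[L^2r;Lb]\qquad\text{and, by \eqref{eq:garfre},}\qquad c_\pm[r_L;b_L]=\frac1L\,c_\pm[L^2r;Lb].$$
Since $\mcl_{L\lambda}[L^2r;Lb]\phi=\phi''+L(b+2\lambda)\phi'+L^2(\lambda^2+\lambda b+r)\phi$, the number $L$ is a genuine small perturbation parameter when $L\to0$.

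\emph{A transfer lemma.} I would isolate the following, to be used in both regimes: \emph{if $\rho_n,\beta_n$ are $1$-periodic with $\sup_n(\|\beta_n\|_{L^\infty}+\|\rho_n\|_{L^\infty})<\infty$ and $k_\lambda[\rho_n;\beta_n]\to\kappa(\lambda)$ for every $\lambda\in\R$, with $\kappa(0)>0$, then $c_+[\rho_n;\beta_n]\to\inf_{\lambda>0}\kappa(\lambda)/\lambda$ and $c_-[\rho_n;\beta_n]\to\inf_{\lambda>0}\kappa(-\lambda)/\lambda$.} Its proof uses only ingredients already recalled: the convexity of $\lambda\mapsto k_\lambda$ turns the pointwise convergence into uniform convergence on compacts; the bound $k_\lambda[\rho_n;\beta_n]\ge\min_\R(\lambda^2+\lambda\beta_n+\rho_n)\ge\lambda^2-|\lambda|\,\|\beta_n\|_{L^\infty}-\|\rho_n\|_{L^\infty}$ from \eqref{kbornes} forces $k_{\pm\lambda}/\lambda\to+\infty$ as $\lambda\to+\infty$ uniformly in $n$; and $\kappa(0)>0$ together with the uniform convergence forces $k_{\pm\lambda}/\lambda\to+\infty$ as $\lambda\to0^+$ uniformly in $n$ large. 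Hence, for $n$ large, the infima in \eqref{eq:garfre} are attained on one fixed compact subinterval of $(0,\infty)$ on which the ratios converge uniformly, so they converge. I expect this uniformity --- excluding that the Freidlin--Gärtner infimum drifts to $0$ or $+\infty$ along the family --- to be the one genuinely delicate point; everything else is soft.

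\emph{The regime $L\to0$.} Fix $\lambda$. The principal eigenvalue of $\mcl_{L\lambda}[L^2r;Lb]$ is simple, hence analytic in $L$, and vanishes (with constant eigenfunction) at $L=0$; expanding eigenvalue and normalized eigenfunction in powers of $L$ and integrating the first- and second-order equations over a period --- all terms vanish at first order because the leading eigenfunction is constant, and at second order one is left with $\lambda^2+\lambda\overline b+\overline r$ where $\overline b:=\int_0^1b$, $\overline r:=\int_0^1r$ --- gives $k_{L\lambda}[L^2r;Lb]=L^2(\lambda^2+\lambda\overline b+\overline r)+O(L^3)$. By the rescaling identity, $k_\lambda[r_L;b_L]\to\lambda^2+\lambda\overline b+\overline r=k_\lambda[\overline r;\overline b]$ for every $\lambda\in\R$; in particular $k_0[r_L;b_L]\to\overline r>0$, and $k_0[r_L;-b_L]$ has the same limit since $k_0[\overline r;\pm\overline b]=\overline r$. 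The transfer lemma (with $\kappa(\lambda)=k_\lambda[\overline r;\overline b]$, resp.\ $k_\lambda[\overline r;-\overline b]$) then yields $c_\pm[r_L;b_L]\to c_\pm[\overline r;\overline b]$ and $c_\pm[r_L;-b_L]\to c_\pm[\overline r;-\overline b]$, and \eqref{eq:c+-} evaluates these to $c_+[\overline r;\overline b]=\overline b+2\sqrt{\overline r}$ and $c_-[\overline r;\overline b]=-\overline b+2\sqrt{\overline r}$; so $c_+[r_L;b_L]-c_+[r_L;-b_L]\to2\overline b$ and $c_+[r_L;b_L]$, $c_-[r_L;-b_L]$ share the limit $\overline b+2\sqrt{\overline r}$, which is exactly \eqref{limL0}.

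\emph{The regime $L\to+\infty$ (with $\overline b=0$).} Here I would use a gauge transform: since $\overline b=0$, the primitive $B_L(x):=\int_0^xb_L$ is $L$-periodic, so conjugation by the positive $L$-periodic factor $e^{\pm B_L/2}$ gives $\mcl_\lambda[r_L;b_L]=e^{-B_L/2}\mcl_\lambda[V_L;0]e^{B_L/2}$ with $V_L:=r_L-b_L'/2-b_L^2/4$, hence $k_\lambda[r_L;b_L]=k_\lambda[V_L;0]$ and $c_\pm[r_L;b_L]=c_\pm[V_L;0]$; replacing $b$ by $-b$ replaces $V_L$ by $\widetilde V_L:=r_L+b_L'/2-b_L^2/4$. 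Since $b_L'=b'(\cdot/L)/L\to0$ uniformly, $V_L$ and $\widetilde V_L$ differ in $L^\infty$ by a vanishing amount from the slowly oscillating potential $W_L:=(r-b^2/4)(\cdot/L)$. The classical slowly-oscillating Schrödinger asymptotics (a Dirichlet test function on a large window where $W_L$ is near its maximum for the lower bound, \eqref{kbornes} for the upper bound) gives $k_0[W_L;0]\to\max_\R(r-b^2/4)>0$, hence, by the Lipschitz dependence of $k_0$ on its zeroth-order coefficient, $k_0[r_L;b_L]=k_0[V_L;0]$ and $k_0[r_L;-b_L]=k_0[\widetilde V_L;0]$ both tend to $\max_\R(r-b^2/4)$. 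For the speeds, $\lambda\mapsto k_\lambda[V_L;0]$ is even (its adjoint is $\mcl_{-\lambda}[V_L;0]$), so $c_+[V_L;0]=c_-[V_L;0]$, and being also convex it is minimal at $0$, whence the minimizer $\lambda_L^*$ in \eqref{eq:garfre} obeys $k_{\lambda_L^*}[V_L;0]\ge k_0[V_L;0]\to\max_\R(r-b^2/4)>0$ while $c_+[V_L;0]\le k_1[V_L;0]\le 1+\max V_L$ stays bounded; hence $\lambda_L^*$ stays bounded away from $0$, and the monotonicity of $c_+$ in the coefficient together with $\|V_L-\widetilde V_L\|_{L^\infty}=\|b_L'\|_{L^\infty}\to0$ and the analogous estimates for $\widetilde V_L$ and $W_L$ give $c_+[V_L;0]-c_+[\widetilde V_L;0]\to0$ and $c_+[V_L;0]-c_+[W_L;0]\to0$. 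As $\lim_Lc_+[W_L;0]$ exists by the known asymptotics of spreading speeds in slowly oscillating media, all four quantities $c_\pm[r_L;\pm b_L]$ converge to this one common limit.
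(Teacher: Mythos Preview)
Your argument is correct and takes a genuinely different route from the paper. For $L\to0$, the paper proves $k_\lambda[r_L;b_L]\to\lambda^2+\lambda\overline b+\overline r$ by an $H^1$-compactness argument on the normalized adjoint eigenfunctions (any subsequential limit is the constant $1$, then one passes to the limit in the weak formulation); you instead rescale to period $1$ and read off the quadratic term by analytic perturbation of a simple eigenvalue, which is quicker once the Kato framework is granted. For the speeds as $L\to0$, the paper exploits Nadin's result that $L\mapsto k_\lambda[r_L;b_L]$ is nondecreasing, which makes the interchange $\lim_{L\to0}\inf_{\lambda>0}=\inf_{\lambda>0}\lim_{L\to0}$ automatic; your transfer lemma, built on convexity and the uniform coercivity from \eqref{kbornes}, is more general and does not rely on that special monotonicity --- note only that you state the lemma for $1$-periodic data but apply it to the $L$-periodic pair $(r_L,b_L)$, which is harmless since the proof uses nothing about the period. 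For $L\to+\infty$, the paper simply cites \cite{HNR11} black-box for both $k_0$ and $c_\pm$; your gauge transform to the drift-free potential $V_L=r_L-b_L'/2-b_L^2/4$, together with $\|V_L-\widetilde V_L\|_{L^\infty}=\|b_L'\|_{L^\infty}\to0$, makes the $\pm b$-symmetry explicit and reduces the external input to the pure Schr\"odinger case $W_L=(r-b^2/4)_L$. The lower bound $\lambda_L^*\ge k_0[V_L;0]/c_+[V_L;0]$ on the Freidlin--G\"artner minimizer, which you correctly flag as the delicate point, is exactly what lets the $L^\infty$-closeness of the potentials propagate to closeness of the speeds.
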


The proof of this proposition is based on more or less explicit computations of the limits as $L\to0$ or $L\to+\infty$ of the induced quantities, which are partly provided by~\cite{EHR09,HamFay10,HNR11,Hei10}.

\subsubsection*{Bounded domains and other boundary conditions}

In bounded domains, spreading is no longer possible, but we may still study the ability of persistence. As we only work in the one-dimensional setting, we focus for simplicity on the bounded domain $[-1,1]$.

For $r\in\mcc^{0,\alpha}([-1,1])$ and $b\in\mcc^{1,\alpha}([-1,1])$, we let $k^{d}_{0}[r;b]$ be the principal eigenvalue defined by the Dirichlet eigenproblem
\begin{equation}\label{eq:dir}
\begin{cases}
\phi''+b\phi'+r\phi=k^{d}_{0}[r;b]\phi\ \text{ in $[-1,1]$},\vspace{3pt}\\
\phi(-1)=\phi(1)=0,\ \ \phi>0\hbox{ in $(-1,1)$}.
\end{cases}
\end{equation}

We provide counter-examples to item~$(ii)$ of Theorem~\ref{thm:equality}, showing a strong contrast between the periodic conditions and the Dirichlet 
boundary condition regarding the effect of the direction of the advection on the ability of persistence.

\begin{proposition}\label{ppn:bounded_domain}
Let $b(x):=x$. For any $r\in\mcc^{0,\alpha}([-1,1])$, we have
\[k^{d}_0[r;b]=k^{d}_0[r;-b]-1.\] 
\end{proposition}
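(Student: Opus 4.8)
The plan is to exhibit an explicit change of unknown that converts an eigenfunction for $b(x)=x$ into an eigenfunction for $b(x)=-x$, at the cost of shifting the eigenvalue by $1$. The starting point is the observation that when $b$ is the restriction of the linear function $x\mapsto x$, the operator $\phi\mapsto \phi''+x\phi'+r\phi$ is conjugate, via multiplication by a Gaussian weight, to a self-adjoint-type operator; concretely, writing $\phi(x)=e^{-x^2/4}\chi(x)$ turns the equation $\phi''+x\phi'+r\phi=k\phi$ into an equation for $\chi$ of the form $\chi'' + \big(r - \tfrac{x^2}{4} - \tfrac12\big)\chi = k\chi$, i.e. a Schrödinger-type operator with no first-order term. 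The key point is that the first-order term has been \emph{symmetrized away}, so the residual zeroth-order coefficient $r-x^2/4$ is now \emph{even under the sign change of the advection} in the sense that it does not see whether we started from $b=x$ or $b=-x$ — except for the constant shift $-\tfrac12$, whose sign is tied to the sign of $b'$.

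First I would make this precise. For $b(x)=x$, substitute $\phi^+ = e^{-x^2/4}\chi$ into \eqref{eq:dir} with the $+$ sign; since the Gaussian weight is positive and vanishes nowhere on $[-1,1]$, $\phi^+>0$ on $(-1,1)$ and the Dirichlet conditions $\phi^+(\pm1)=0$ are equivalent to $\chi(\pm1)=0$. One computes
\[
(\phi^+)'' + x(\phi^+)' + r\phi^+ = e^{-x^2/4}\Big(\chi'' + \big(r - \tfrac{x^2}{4} - \tfrac12\big)\chi\Big),
\]
so $\phi^+$ solves the eigenproblem with eigenvalue $k$ if and only if $\chi$ solves $\chi'' + (r - x^2/4)\chi = (k+\tfrac12)\chi$ on $[-1,1]$ with $\chi(\pm1)=0$ and $\chi>0$ on $(-1,1)$. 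Next, perform the same substitution for $b(x)=-x$: writing $\phi^- = e^{+x^2/4}\tilde\chi$ and using that the sign of the Gaussian exponent flips with $b$, one gets
\[
(\phi^-)'' - x(\phi^-)' + r\phi^- = e^{+x^2/4}\Big(\tilde\chi'' + \big(r - \tfrac{x^2}{4} + \tfrac12\big)\tilde\chi\Big),
\]
so $\phi^-$ is a Dirichlet principal eigenfunction for the $-b$ problem with eigenvalue $\tilde k$ exactly when $\tilde\chi$ solves $\tilde\chi'' + (r - x^2/4)\tilde\chi = (\tilde k - \tfrac12)\tilde\chi$ with the same Dirichlet condition and positivity. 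Both $\chi$ and $\tilde\chi$ are therefore Dirichlet principal eigenfunctions of the \emph{same} Schrödinger operator $\chi\mapsto \chi'' + (r-x^2/4)\chi$ on $[-1,1]$; by uniqueness of the Dirichlet principal eigenvalue of that operator, $k+\tfrac12 = \tilde k - \tfrac12$, i.e. $\tilde k = k + 1$. Identifying $k=k^d_0[r;b]$ and $\tilde k = k^d_0[r;-b]$ via the Krein--Rutman characterization gives $k^d_0[r;b] = k^d_0[r;-b] - 1$, as claimed.

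The only point requiring a little care — and the main (mild) obstacle — is the bookkeeping that ensures the transformed problems are genuine \emph{principal} Dirichlet eigenproblems, so that the uniqueness statement applies: one must check that $r - x^2/4 \in \mathcal{C}^{0,\alpha}([-1,1])$ (immediate), that the Gaussian conjugation is a bijection between positive $\mathcal{C}^2$ functions vanishing at $\pm1$ on both sides (immediate, since $e^{\pm x^2/4}$ is smooth and positive), and that the resulting operator admits a unique $k^d$ with a positive Dirichlet eigenfunction (standard Krein--Rutman / Sturm--Liouville theory, exactly as used to define \eqref{eq:dir}). Once these are in place, the sign-tracking of the $\pm\tfrac12$ constant — which is the genuine content of the proposition and the reason the Dirichlet case differs from the periodic case of Theorem~\ref{thm:equality}(ii) — yields the result directly. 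Finally, I would remark that the asymmetry comes precisely from the fact that the boundary term $\int_{-1}^1 ((\phi^+)^2)' \,b\,dx$-type cancellation used in the periodic setting is no longer available, which is why conjugating by $e^{-x^2/4}$ (rather than by $e^{-\frac12\int b}$, which here is the same thing) leaves behind the uncancelled $-b'/2 = \mp\tfrac12$.
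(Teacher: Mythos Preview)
Your proof is correct, but it takes a genuinely different route from the paper's. The paper observes the one-line algebraic identity
\[
\phi''+x\phi'+r\phi=\phi''-(-x\phi)'+(r-1)\phi,
\]
which says exactly that $\mcl_0[r;b]=\mcl_0^*[r;-b]-1$ as differential operators on $[-1,1]$ with Dirichlet boundary conditions; since an operator and its formal adjoint have the same Dirichlet principal eigenvalue, the conclusion $k_0^d[r;b]=k_0^d[r;-b]-1$ follows immediately. Your argument instead performs the Liouville (Gaussian) conjugation $\phi^\pm=e^{\mp x^2/4}\chi$ to reduce both eigenproblems to the \emph{same} self-adjoint Schr\"odinger operator $\chi\mapsto\chi''+(r-x^2/4)\chi$ with Dirichlet conditions, the eigenvalues being shifted by $\pm\tfrac12$ respectively; equating those shifts gives the result. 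The paper's approach is more economical and highlights the adjoint structure; yours is a bit longer but has the merit of making explicit the mechanism you mention at the end---namely that the residual $-b'/2=\mp\tfrac12$ is what survives once the first-order term is removed---and it is in fact the Dirichlet analogue of the transformation $\phi=Fe^{-B/2}$ the paper itself uses in the periodic setting (proof of Proposition~\ref{prop:k0neq}).
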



\subsection{Relationship between the ability of persistence and the spreading speed}\label{sec22}

We now focus on our second question: \emph{Does a better ability of persistence imply higher spreading speeds of the invasive species?} Although the intuition might suggest that the answer to this question is \emph{yes}, we will construct counter-examples in which increasing the ability of persistence decreases the spreading speeds.

First, we point out that $k_0[r;b]>k_0[\widetilde{r};\widetilde{b}]$ does \emph{not} imply in general that $c_{+}[r;b]>c_{+}[\widetilde{r};\widetilde{b}]$ or similarly $c_{-}[r;b]>c_{-}[\widetilde{r};\widetilde{b}]$. This is a consequence of~\cite[Theorem 3.10]{BKR25}, which implies that, for $r$ with $\max_\R r>0$ and $b$ well-chosen,
\[k_0[r;\gamma b]\mathop{\longrightarrow}_{\gamma\to+\infty}\max_\R r>0\ \hbox{ and }\ c_+[r;\gamma b]\mathop{\longrightarrow}_{\gamma\to+\infty}0.\]
Therefore, with $r_0$ constant such that $0<r_0<\max_\R r$, one has, for all $\gamma>0$ large enough,
\begin{equation}\label{eq:gammalarge}
k_0[r;\gamma b]>r_0=k_0[r_0;0]\ \hbox{ and }\ c_+[r;\gamma b]<2\sqrt{r_0}=c_+[r_0;0].
\end{equation}
Second, the converse implication is also wrong: $c_{+}[r;b]>c_{+}[\widetilde{r};\widetilde{b}]$ (or similarly $c_{-}[r;b]>c_{-}[\widetilde{r};\widetilde{b}]$) does \emph{not} imply in general that $k_0[r;b]>k_0[\widetilde{r};\widetilde{b}]$. Formulas~\eqref{eq:gammalarge} provide some counter-examples. But, even more simply, taking constants $r_0$, $\widetilde{r}_0$, $b_0$ and $\widetilde{b}_0$ with $0<r_0<\widetilde{r}_0$ and $b_0-\widetilde{b}_0>2\sqrt{\widetilde{r}_0}-2\sqrt{r_0}>0$, we have:
$$c_+[r_0;b_0]=2\sqrt{r_0}+b_0>2\sqrt{\widetilde{r}_0}+\widetilde{b}_0=c_+[\widetilde{r}_0;\widetilde{b}_0],$$
but
$$0<k_0[r_0;b_0]=r_0<\widetilde{r}_0=k_0[\widetilde{r}_0,\widetilde{b}_0].$$
Further, note that these examples allow one to make the ratio $|c_{+}[r;b]|/k_0[r;b]$ arbitrarily large or arbitrarily close to~$0$. Therefore, neither $k_0[r;b]$ or $c_{+}[r;b]$ is controlled by the other.

These constructions rely heavily on the fact that we may choose $b\not\equiv0$. The second one makes use of a disequilibrium between spreading to the right and spreading to the left (indeed, we have $c_+[r_0;b_0]>c_+[\widetilde{r}_0,\widetilde{b}_0]$, but $c_-[r_0;b_0]=-b_0+2\sqrt{r_0}<-\widetilde{b}_0+2\sqrt{\widetilde{r}_0}=c_-[\widetilde{r}_0;\widetilde{b}_0]$ since $0<r_0<\widetilde{r}_0$ and $b_0>\widetilde{b}_0$). In fact, even if we require \enquote{equilibrium}, in the sense that 
\[c_{-}[r;b]>c_{-}[\widetilde{r};\widetilde{b}]\ \hbox{ and }\ c_{+}[r;b]>c_{+}[\widetilde{r};\widetilde{b}],\]
we can have $k_0[r;b]<k_0[\widetilde{r};\widetilde{b}]$. To prove it, we will work with $b\equiv0$, in which case the spreading speed to the left and the spreading speed to the right coincide, that is,
$$c_+[r;0]=c_-[r,0]>0,$$
as soon as $k_0[r;0]>0$, as a particular case of~\eqref{c-=c+}.
  
In general, what comparisons can be made when $b\equiv0$? First, it must be recalled that if $r\leq\widetilde{r}$, then $k_0[r;0]\leq k_0[\widetilde{r};0]$; and if further $k_0[r;0]>0$, then $0<c_+[r;0]\leq c_+[\widetilde{r};0]$. This is a consequence of the monotonicity of the eigenvalues $k_\lambda[r;0]=k_{-\lambda}[r;0]$ with respect to $r$. In the sequel, we will go much beyond this immediate observation. On the one hand, we will show that the spreading speed $c_-[r;0]=c_+[r;0]$ is controlled by the ability of persistence $k_0[r;0]$. Namely, if the ability of persistence is low, then the spreading speed must be low as well. On the other hand, we will see that having a low spreading speed does not imply that the ability of persistence is low.
  
\begin{theorem}\label{thm:no_general_comparison}
Let $r\in\mcc^{0,\alpha}(\R)$ be a $1$-periodic function such that $k_0[r;0]>0$.
\begin{enumerate}[label=$(\roman*)$]
\item There exists a $1$-periodic function $\widetilde{r}\in\mcc^{0,\alpha}(\R)$ such that
$$0<k_0[r;0]<k_0[\widetilde{r};0]\ \hbox{ and }\ c_+[r;0]>c_+[\widetilde{r};0]>0.$$
\item More generally speaking, for any $M>0$, there exists a $1$-periodic function $\widetilde{r}\in\mcc^{0,\alpha}(\R)$ such that
\begin{equation}\label{ineq:M}
k_0[\widetilde{r};0]>M\ \hbox{ and }\ 0<c_+[\widetilde{r};0]<\frac1M.
\end{equation}
\item If $r$ is constant, then $0<c_+[\widetilde{r};0]<c_+[r;0]=2\sqrt{r}$ for every non-constant $1$-periodic $\widetilde{r}\in\mcc^{0,\alpha}(\R)$ such that $0<k_0[\widetilde{r};0]\leq k_0[r;0]=r$.\footnote{If $\widetilde{r}$ is a positive constant, then $k_0[\widetilde{r};0]=\widetilde{r}>0$ and $c_+[\widetilde{r};0]=2\sqrt{\widetilde{r}}$, hence the pairs $(k_0[r;0],k_0[\widetilde{r};0])=(r,\widetilde{r})$ and $(c_+[r;0],c_+[\widetilde{r};0])=(2\sqrt{r},2\sqrt{\widetilde{r}})$ are ordered the same way as well.}
\item If $r$ is non-constant, then
\[0<c_+[r;0]<2\sqrt{k_0[r;0]}\]
and there exists a constant $\widetilde{r}>0$ such that
$$0<\widetilde{r}=k_0[\widetilde{r};0]<k_0[r;0]\ \hbox{ and }\ 2\sqrt{\widetilde{r}}=c_+[\widetilde{r};0]>c_+[r;0]>0.$$
\end{enumerate}
\end{theorem}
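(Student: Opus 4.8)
The plan is to treat the four items separately, with item $(iii)$ (together with its reverse inequality from item $(iv)$) as the analytic heart of the argument, and items $(i)$, $(ii)$ as consequences obtained by a scaling/concentration construction.

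For item $(iii)$, the key fact is the strict convexity-type comparison for the principal eigenvalues of $\mcl_\lambda[\cdot;0]$. Recall from \eqref{eq:garfre} that $c_+[\widetilde r;0]=\inf_{\lambda>0}k_\lambda[\widetilde r;0]/\lambda$, and that $k_\lambda[\widetilde r;0]=k_{-\lambda}[\widetilde r;0]$ when $b\equiv0$. First I would establish the pointwise inequality $k_\lambda[\widetilde r;0]\le\lambda^2+k_0[\widetilde r;0]$ for all $\lambda$: this should follow by comparing $\mcl_\lambda[\widetilde r;0]$ with $\mcl_0[\widetilde r;0]$ after conjugating out the drift term $2\lambda\psi'$, i.e. writing $\psi=e^{-\lambda x}\varphi$ (which is admissible because $b\equiv0$ makes the exponential factor compatible with the Neumann/periodic framework after the standard change of variables), or alternatively by testing the variational characterization of $k_\lambda$. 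Then $c_+[\widetilde r;0]\le\inf_{\lambda>0}(\lambda^2+k_0[\widetilde r;0])/\lambda=2\sqrt{k_0[\widetilde r;0]}$. The strictness (strict inequality $c_+[\widetilde r;0]<2\sqrt{k_0[\widetilde r;0]}$) when $\widetilde r$ is non-constant is the main obstacle: I expect it to come from the fact that equality in $k_\lambda[\widetilde r;0]=\lambda^2+k_0[\widetilde r;0]$ forces the principal eigenfunction of $\mcl_0[\widetilde r;0]$ to be constant, which forces $\widetilde r$ to be constant. Given this, if $r$ is constant then $k_0[r;0]=r$ and $c_+[r;0]=2\sqrt r$ by \eqref{eq:c+-}, so $0<k_0[\widetilde r;0]\le r$ yields $c_+[\widetilde r;0]<2\sqrt{k_0[\widetilde r;0]}\le2\sqrt r=c_+[r;0]$, which is item $(iii)$. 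The first assertion of item $(iv)$ is exactly the strict inequality just proved applied to $r$ itself; the second assertion then follows by choosing the constant $\widetilde r$ with $c_+[r;0]^2/4<\widetilde r<k_0[r;0]$ (this interval is nonempty precisely because $c_+[r;0]<2\sqrt{k_0[r;0]}$), so that $c_+[\widetilde r;0]=2\sqrt{\widetilde r}>c_+[r;0]$ while $k_0[\widetilde r;0]=\widetilde r<k_0[r;0]$.

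For items $(i)$ and $(ii)$, the idea is to build $\widetilde r$ with large $k_0[\widetilde r;0]$ but small spreading speed by concentrating the positive part of the growth rate on a small fraction of the period. Concretely, I would take $\widetilde r=\widetilde r_{N,A}$ to be $1$-periodic, roughly equal to a large constant $A>0$ on an interval of length $1/N$ and to a negative constant $-\kappa$ elsewhere (smoothed to stay in $\mcc^{0,\alpha}$). As $A\to+\infty$ with the support width $1/N$ fixed, $k_0[\widetilde r_{N,A};0]\to+\infty$ (the principal eigenvalue is bounded below by, e.g., the Dirichlet principal eigenvalue on the favorable interval, which tends to $+\infty$ with $A$), giving the first inequality in \eqref{ineq:M}. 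For the spreading speed, one uses that $c_+[\widetilde r;0]=\inf_{\lambda>0}k_\lambda[\widetilde r;0]/\lambda$ and the upper bound $k_\lambda[\widetilde r;0]\le\max_{\R}(\lambda^2+\widetilde r)=\lambda^2+A$ from \eqref{kbornes}; this only gives $c_+\le2\sqrt A$, which is too weak, so instead I would exploit the homogenization/slow-oscillation limit: rescaling space so that the favorable region is small relative to the propagation scale, the effective growth rate governing $c_+$ is an average-type quantity (heuristically $\int_0^1\widetilde r$ up to corrections, as in the $L\to+\infty$ regime of Proposition \ref{ppn:limits_L}) which stays bounded even as $A\to\infty$, provided $\kappa$ is increased jointly so that $\int_0^1\widetilde r$ stays controlled. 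The cleanest route is probably to fix $\int_0^1\widetilde r$ (or the harmonic-mean-type effective diffusion) moderate while sending the amplitude to infinity, and invoke the known characterization of $c_+[\widetilde r;0]$ together with the bound $c_+[\widetilde r;0]\le 2\sqrt{\overline{k}}$ where $\overline k$ is a suitable averaged eigenvalue that stays bounded. Tuning the three parameters $N,A,\kappa$ then makes $c_+[\widetilde r;0]$ as small as $1/M$ while $k_0[\widetilde r;0]>M$, giving \eqref{ineq:M}; item $(i)$ is the special case where one only needs $k_0[\widetilde r;0]>k_0[r;0]$ and $c_+[\widetilde r;0]<c_+[r;0]$, obtained by choosing the parameters relative to the fixed $r$.

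The step I expect to be genuinely delicate is the quantitative upper bound on $c_+[\widetilde r;0]$ in items $(i)$--$(ii)$: proving that one can keep the spreading speed small while the principal eigenvalue blows up requires an effective-speed estimate that is not immediate from \eqref{kbornes}. I would look for it either in the slowly-oscillating asymptotics underlying Proposition \ref{ppn:limits_L}, or via a direct construction of a supersolution to \eqref{eq:KPP} of the form $e^{-\lambda(x-ct)}\psi(x)$ travelling at small speed $c$, where $\psi$ solves an appropriate periodic problem and the smallness of $c$ is forced by the barrier created by the long unfavorable stretch of $\widetilde r$. All the other pieces — the pointwise bound $k_\lambda\le\lambda^2+k_0$ for $b\equiv0$, the rigidity characterizing its equality case, and the elementary optimization $\inf_{\lambda>0}(\lambda^2+k_0)/\lambda=2\sqrt{k_0}$ — are routine once the framework \eqref{eq:garfre}--\eqref{kbornes} is in hand.
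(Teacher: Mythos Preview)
Your treatment of items $(iii)$ and $(iv)$ is essentially the paper's argument. The inequality $k_\lambda[\widetilde r;0]\le\lambda^2+k_0[\widetilde r;0]$ and the rigidity of its equality case are indeed the core; the paper obtains them via Nadin's variational formula $k_\lambda[\widetilde r]=\min_\mu k_0[\widetilde r+\lambda^2+(\mu')^2+2\lambda\mu']$ (taking $\mu\equiv0$ for the inequality, and identifying the unique minimizer as $\mu_m=\frac12\ln(\phi/\psi)$, non-constant when $\widetilde r$ is non-constant, for strictness). One caution: your conjugation $\psi=e^{-\lambda x}\varphi$ does \emph{not} preserve $1$-periodicity, so it cannot directly compare periodic principal eigenvalues; the variational route is the one that actually works.

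For items $(i)$--$(ii)$ there is a real gap, and you diagnosed it yourself. Sending the favorable value $A\to+\infty$ forces $k_\lambda[\widetilde r]\to+\infty$ for \emph{every} $\lambda$ (the same Dirichlet lower bound you use for $k_0$ applies to all $k_\lambda$), so there is no way to make $c_+=\inf_{\lambda>0}k_\lambda/\lambda$ small along that path; neither the slow-oscillation asymptotics nor an unspecified ``averaged eigenvalue $\overline k$'' rescues this. The paper's mechanism is the opposite one: \emph{fix} the favorable plateau at height $R=M+16\pi^2$ on an interval of length $1/4$, and send the unfavorable value $A\to-\infty$. Then for each $\lambda$, $k_\lambda[\widetilde r^A]$ is squeezed between the Dirichlet eigenvalue on the favorable interval and $k_\lambda[\widetilde r^0]$, and one shows $k_\lambda[\widetilde r^A]\to R-16\pi^2=M$ as $A\to-\infty$. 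The key point (and the reason the construction works) is that this Dirichlet eigenvalue of $\psi''+2\lambda\psi'+(R+\lambda^2)\psi$ on $[0,1/4]$ equals $R-16\pi^2$ \emph{independently of $\lambda$}, because on a bounded interval with Dirichlet conditions the conjugation by $e^{-\lambda x}$ \emph{is} legitimate. Hence for any fixed $\lambda_0>M^2$ one gets $c_+[\widetilde r^A]\le k_{\lambda_0}[\widetilde r^A]/\lambda_0<1/M$ for $A$ negative enough, while $k_0[\widetilde r^A]>M$ for all $A$. The missing idea in your proposal is precisely this: degrade the unfavorable zone rather than amplify the favorable one, so that all $k_\lambda$ collapse to a common finite Dirichlet limit and $k_\lambda/\lambda$ can be made small by choosing $\lambda$ large.
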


On the one hand, items~$(i)$ and~$(iv)$ imply that the relationship between the ability of persistence and the spreading speed is not monotonous. In the proof of item~$(i)$, a way to understand this non-monotonicity is to construct an environment with an extremely unfavorable zone, through which individuals will not be able to go, and a very favorable zone, so that the population is yet able to survive. On the other hand, item~$(iii)$ shows that there is still a partial monotonicity if one of the intrinsic growth rates is constant. Lastly, item~$(iv)$ shows that the spreading speed is always controlled by the ability of persistence, while the converse is false, from item~$(ii)$.

\paragraph{Outline of the paper.} Section~\ref{s:proof_equality} is devoted to the proof of Theorem~\ref{thm:equality}, which is our main result about the role of the direction of the advection term. The other results on the role of the direction of the advection term (Propositions~\ref{prop:k0neq},~\ref{ppn:limits_L} and~\ref{ppn:bounded_domain}) are proved in Section~\ref{secadvection}. Section~\ref{s:counter_examples_cvsk} is devoted to the proof of Theorem~\ref{thm:no_general_comparison} on the monotonicity vs. non-monotonicity relationships between the ability of persistence and the spreading speeds in the case $b=0$. Lastly, Appendix~\ref{s:proof_persistence_general_b} deals with a condition for local persistence at large time and Appendix~\ref{s:construction_phi} provides some details on the origin of a crucial change of function used in the proof of Theorem~\ref{thm:equality} in Section~\ref{s:proof_equality}.


\section{A class of pairs $[r;b]$ such that $k_{\lambda}[r;b]=k_{-\lambda}[r;-b]$ and $c_+[r;b]=c_-[r;-b]$: proof of Theorem~\ref{thm:equality}}\label{s:proof_equality}

This section is devoted to the proof of Theorem~\ref{thm:equality}. We first prove item~$(i)$, and then item~$(ii)$.

\begin{proof}[Proof of Theorem~\ref{thm:equality}, item $(i)$] 
Assume that condition~$(i)$ holds. Let $\lambda\in\R$ and let $\psi\in\mcc^{2,\alpha}(\R)$ be a positive, $1$-periodic principal eigenfunction associated with $\mcl_{\lambda}[r;b]$, i.e.
\[\psi''+(b+2\lambda)\psi'+(r+\lambda b+\lambda^2)\psi=k_{\lambda}[r;b]\,\psi.\]
Set $\varphi(x):=\psi(x_0-x)$, for all $x\in\R$. Using $r(x)=r(x_0-x)$ and $b(x)=b(x_0-x)$ for all $x\in\R$, we get:
\[\varphi''+(-b-2\lambda)\varphi'+(r+\lambda b+\lambda^2)\varphi=k_{\lambda}[r;b]\,\varphi.\]
The function $\varphi\in\mcc^{2,\alpha}(\R)$ is $1$-periodic and positive; thus, it is a principal eigenfunction for $\mcl_{-\lambda}[r;-b]$. By uniqueness of the principal eigenvalue, we then have:
\[k_{\lambda}[r;b]=k_{-\lambda}[r;-b].\]
Taking~$\lambda=0$, we obtain $k_{0}[r;b]=k_{0}[r;-b]$. Using the Freidlin-Gärtner formulas~\eqref{eq:garfre}, this also gives $c_+[r;b]=c_-[r;-b]$ if $k_0[r;b]=k_0[r;-b]>0$ (otherwise, if $k_0[r;b]=k_0[r;-b]\le0$, then both $c_+[r;b]$ and $c_-[r;-b]$ are equal to $-\infty$).
\end{proof}

Now, let us prove item~$(ii)$, where~$r$ can be written $r=\beta+\gamma b$. Our goal is to construct a positive $1$-periodic function~$\varphi$ satisfying 
\begin{equation}\label{eq:varphi}
\varphi'' - b \varphi' + (\beta+\gamma b) \varphi = k_0[\beta+\gamma  b ;b]\,\varphi
\end{equation}
in $\R$. Then,~$\varphi$ is a principal eigenfunction of the operator $\mcl_0[\beta+\gamma b;-b]$ associated with the eigenvalue  $k_0[\beta+\gamma  b ;b]$. By uniqueness, this means that $k_0[\beta+\gamma b;-b]=k_0[\beta+\gamma b;b]$. The function $\varphi$ will be obtained as an explicit transformation of a $1$-periodic positive solution $\phi\in\mcc^{2,\alpha}(\R)$ of the principal eigenproblem
\begin{equation}\label{eq:phi+}
\phi''+b\phi'+(\beta+\gamma b)\phi=k_0[\beta+\gamma b;b]\,\phi
\end{equation}
in $\R$. To construct the function $\varphi$, we first prove the following property of any solution $\phi\in\mcc^2(\R)$ of~\eqref{eq:phi+}, even non-periodic.

\begin{lemma}\label{lem:sign_phi'+phi}
Let $b$ be a continuous function and let $\gamma\in\R$. Let $\phi\in\mcc^2(\R)$ be positive in $\R$ $($we do not require $b$ or $\phi$ to be periodic$)$ and let $k\in\R$ satisfy
\begin{equation}\label{eq:phi}
\mathcal{L}_{0}[\gamma\,b;b]=k\phi
\end{equation}
in $\R$. Then, either $\phi'+\gamma\phi\equiv0$ in $\R$, or $\phi' + \gamma \phi$ vanishes at most once in $\R$.
\end{lemma}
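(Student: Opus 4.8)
The plan is to derive a first-order ODE for the quantity $w := \phi' + \gamma\phi$ and analyze the sign of its zeros. Writing $\phi'' = w' - \gamma\phi'$ and $\phi' = w - \gamma\phi$, I substitute into the equation $\phi'' + b\phi' + (\beta + \gamma b)\phi = k\phi$ (here $\beta$ does not appear since Lemma~\ref{lem:sign_phi'+phi} is stated with $\mcl_0[\gamma b; b]$, i.e. $\beta=0$; I will keep $\beta$ for the application but the lemma itself has $\beta = 0$, so the target equation is $\phi'' + b\phi' + \gamma b\phi = k\phi$). Substituting gives $w' - \gamma\phi' + b\phi' + \gamma b\phi = k\phi$, and replacing $\phi' = w - \gamma\phi$ yields
\[
w' - \gamma(w-\gamma\phi) + b(w-\gamma\phi) + \gamma b\phi = k\phi,
\]
so the $\gamma b\phi$ terms cancel and we obtain $w' + (b - \gamma)w + (\gamma^2 - k)\phi = 0$, i.e. $w' = (\gamma - b)w + (k - \gamma^2)\phi$.

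The key observation is then to examine $w$ at a point $x_*$ where $w(x_*) = 0$: at such a point the ODE gives $w'(x_*) = (k - \gamma^2)\phi(x_*)$. Since $\phi > 0$ everywhere, the sign of $w'(x_*)$ is the sign of $k - \gamma^2$, which is a fixed constant independent of $x_*$. I would split into three cases. If $k > \gamma^2$, then $w' > 0$ at every zero of $w$, so $w$ is strictly increasing through each of its zeros; a continuous function that is strictly increasing at every zero can have at most one zero (if it had two zeros $x_1 < x_2$, then $w > 0$ just after $x_1$ and $w < 0$ just before $x_2$, forcing an intermediate zero where $w$ is non-increasing, a contradiction — one makes this rigorous by taking $x_1$ the largest zero that is $< x_2$). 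Symmetrically, if $k < \gamma^2$, then $w$ is strictly decreasing at each zero and again has at most one zero. If $k = \gamma^2$, the ODE reduces to the linear homogeneous equation $w' = (\gamma - b)w$, whose solutions are $w(x) = w(0)\exp\big(\int_0^x (\gamma - b(y))\,dy\big)$; this is either identically zero (when $w(0) = 0$) or never zero. That is exactly the dichotomy claimed: either $\phi' + \gamma\phi \equiv 0$, or it vanishes at most once.

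The main obstacle — really the only delicate point — is making the "at most one zero" deduction fully rigorous in the cases $k \neq \gamma^2$, since "strictly increasing at each zero" is a pointwise statement and one must rule out accumulation of zeros or oscillation. The clean way is: suppose $w$ has two distinct zeros; by continuity the set of zeros in the compact interval between them is closed, so it has a largest element $a$ and (on $(a, x_2]$) one can argue $w$ has constant sign on $(a,x_2)$, hence $w(x_2)=0$ forces $w'(a)$ and the sign of $w$ on $(a,x_2)$ to be inconsistent with $w'(a) \neq 0$. Alternatively — and perhaps more elegantly for the write-up — one can integrate: define $W(x) := w(x)\exp\big(\int_0^x(b(y)-\gamma)\,dy\big)$, so that $W'(x) = (k-\gamma^2)\phi(x)\exp\big(\int_0^x(b(y)-\gamma)\,dy\big)$, which has the constant sign of $k - \gamma^2$ (since $\phi > 0$ and the exponential is positive). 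Hence $W$ is strictly monotone on $\R$, so $W$ — and therefore $w$, which differs from $W$ by a positive factor — vanishes at most once. This integrating-factor argument handles the case $k = \gamma^2$ simultaneously (then $W$ is constant, so $w$ is either identically zero or never zero), so I would present the proof via this single $W$ and avoid case analysis entirely.
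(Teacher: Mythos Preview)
Your argument is correct. The derivation of the first-order equation $w'=(\gamma-b)w+(k-\gamma^2)\phi$ for $w=\phi'+\gamma\phi$ and the observation that $w'(x_*)=(k-\gamma^2)\phi(x_*)$ at any zero $x_*$ is exactly what the paper does in the cases $k\neq\gamma^2$ (the paper writes the same computation slightly differently, directly evaluating $(\phi'+\gamma\phi)'(x_0)$ from the original second-order equation). The difference lies in the case $k=\gamma^2$ and in the packaging: the paper treats $k=\gamma^2$ separately via the Riccati substitution $\phi'/\phi+\gamma$, obtaining $w'=w(2\gamma-b-w)$ and invoking Cauchy--Lipschitz, whereas your integrating-factor function $W(x)=w(x)\exp\big(\int_0^x(b-\gamma)\big)$ yields $W'=(k-\gamma^2)\phi\,e^{\int(b-\gamma)}$, which is strictly signed or identically zero. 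This handles all three cases at once and also makes the ``at most one zero'' step completely rigorous without any discussion of accumulation of zeros. Your route is a genuine (if modest) simplification of the paper's case analysis.
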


\begin{proof}
We consider three separate cases depending on the value of $k$. Consider first the case $k>\gamma^2$. Assume that there exists $x_0\in\R$ such that $\phi'(x_0)+\gamma \phi(x_0)=0$. We have $ \phi''(x_0) + b(x_0) (\phi'(x_0)+\gamma\phi(x_0))=k\phi(x_0)$, which implies that $\phi''(x_0)  = k \phi(x_0)$. Thus, recalling that $\phi(x_0)>0$ and $k>\gamma^2$,
$$(\phi'+\gamma \phi)'(x_0)=\phi''(x_0)+\gamma \phi'(x_0)=k\phi(x_0)+\gamma \phi'(x_0)>\gamma \left( \gamma\phi(x_0)+\phi'(x_0)\right)=0.$$
Therefore, at each point where the function $\phi' + \gamma \phi$ vanishes, its derivative $(\phi'+\gamma \phi)'$ is positive. As a consequence, $\phi' + \gamma \phi$ can vanish at most once in $\R$.

The same arguments as in the previous paragraph imply that, if $k<\gamma^2$ and if $\phi'(x_0)+\gamma\phi(x_0)=0$ for some $x_0\in\R$, then $(\phi'+\gamma \phi)'(x_0)<0$. This again shows that $\phi'+\gamma\phi$ vanishes at most once in $\R$.

Lastly, consider the case $k=\gamma^2$. Dividing  \eqref{eq:phi} by the positive function $\phi$, we obtain:
$$\frac{\phi''}{\phi} + b \frac{\phi'}{\phi} + \gamma b = \gamma^2.$$
Setting $w:=\frac{\phi'}{\phi}+\gamma$, it follows that $(w-\gamma)'+ (w-\gamma)^2 + b (w-\gamma) + \gamma b = \gamma^2$ in $\R$, thus $w' = w (2\gamma - b - w)$ in $\R$. By the Cauchy-Lipschitz theorem, either $w\equiv 0$ in $\R$, or $w$ does not vanish in $\R$. The proof of Lemma~\ref{lem:sign_phi'+phi} is thereby complete.
\end{proof}

We now turn to the proof of Theorem~\ref{thm:equality} with assumption~$(ii)$.
We first prove the result for the ability of persistence, and then for the spreading speed.

\begin{proof}[Proof of Theorem~\ref{thm:equality}, item~$(ii)$, for $k_0$]
Assume that there exist $\beta\in\R$ and $\gamma\in\R$ such that $r=\beta+\gamma b$ in $\R$, and let us prove that $k_0[r;b]=k_0[r;-b]$. Without loss of generality, we may assume that $\beta=0$, since $k_0[\beta+\gamma b;\pm b]=\beta+k_0[\gamma b;\pm b]$. Moreover, if $r=\gamma b$ with $\gamma=0$, then $k_0[r;b]=k_0[0;b]=0=k_0[0;-b]=k_0[r;-b]$, so the result holds.

From now on, we assume that $\beta=0$ and $\gamma\neq0$, that is, $r=\gamma b$ with $\gamma\neq0$. Remember that $\phi\in\mcc^{2,\alpha}(\R)$ is a $1$-periodic positive principal eigenfunction of~\eqref{eq:phi+}. The positivity and periodicity of $\phi$ and the fact that $\gamma\neq0$ imply that $\phi'+\gamma\phi\not\equiv0$ in $\R$. Therefore, using Lemma~\ref{lem:sign_phi'+phi}, the function $\phi'+\gamma\phi$ vanishes at most once in $\R$. Hence, by periodicity, the function $\phi'+\gamma\phi$ does not vanish in $\R$, and then has a constant strict sign in $\R$. Thus, the $\mcc^2(\R)$ function $g$ given in $\R$ by
$$g(x) := \int_0^x \frac{\gamma\,\phi'(y) + k_0[\gamma \, b ;b] \phi(y)}{\phi'(y) + \gamma\phi(y)} \, dy$$
is well-defined. Now, set, for $x\in\R$,
$$h(x):=\int_0^x\frac{e^{-g(y)}}{\phi'(y)+\gamma\phi(y)}\,dy.$$
The function $h$ is of class $\mcc^2(\R)$ and, since $h'$ has a constant strict sign in $\R$ and $h(0)=0$, we have $h(1)\neq 0$. We then set, for $x\in\R$,
\begin{equation}\label{defvarphi}
\varphi(x) := e^{g(x)}(1+Ch(x)),\ \hbox{ with }C := \frac{e^{-g(1)} - 1}{h(1)},
\end{equation}
whence $\varphi(0)=1=\varphi(1)$. The function $\varphi$ defined is of class $\mcc^2(\R)$. Furthermore, $\varphi$ is positive in $[0,1]$ since the function $1+Ch$ is monotone in $\R$ and takes positive values, $1$ and $e^{-g(1)}$, at $0$ and $1$, respectively. We refer to Appendix~\ref{s:construction_phi} for the motivation for this expression of~$\varphi$.

For convenience, we now define the differential operator
$$\T : \mu \mapsto \mu''-b\mu'+(\gamma b-k_0[\gamma\,b;b])\,\mu$$
acting on $\mcc^2(\R)$ functions $\mu$. Let us first show that $\T [e^g]=0$ in $\R$. We have 
$$\T[e^g]=e^g\big(g''+(g')^2 - b g' + \gamma b - k_0[\gamma \, b ;b]\big)$$
in $\R$. Using Equation~\eqref{eq:phi+}, satisfied by $\phi$, there holds
$$b=\frac{k_0[\gamma \, b ;b] \phi -\phi''}{\phi'+\gamma\phi}$$
in $\R$. Together with the formulas
\begin{equation}\label{eq:g'}
g'=\frac{\gamma\phi' + k_0[\gamma \, b ;b] \phi}{\phi' + \gamma \phi}\ \hbox{ and }\ g''=(\gamma^2-k_0[\gamma \, b ;b])\frac{\phi''\phi-(\phi')^2}{(\phi'+\gamma\phi)^2}
\end{equation}
in $\R$, we obtain that $\T[e^g]=0$ in $\R$.

Let us then show that $\T [\varphi]=0$ in $\R$. We have 
$$\T [\varphi]=\T [e^g(1+C \,h)]=\T [e^g]+ C\,\T[e^g h]=C\,\T[e^g h]$$
since $\T[e^g]=0$ in $\R$, whence
\begin{align*}
\T[\varphi] & = C\big([e^g h]''- b [e^g h]' +(\gamma b-k_0[\gamma \, b ;b])e^g h\big) \\
& = C\,e^g\left[(g''+(g')^2 - b g' + \gamma b - k_0[\gamma \, b ;b])h + h''+  2 h' g' - b h'\right] \\
& = C\,\T [e^g]h + C\,e^g\left[h''+  2 h' g' - b h'\right] \\
& = C\,e^g\left[h''+  2 h' g' - b h'\right],
\end{align*}
again because $\T[e^g]=0$ in $\R$. Since
\[h'=\frac{e^{-g}}{\phi'+\gamma\phi}\ \hbox{ and }\ h''=e^{-g}\left[\frac{-g'}{\phi'+\gamma\phi}-\frac{\phi''+\gamma\phi'}{(\phi'+\gamma\phi)^2}\right]\]
in $\R$, together with~\eqref{eq:phi+} and~\eqref{eq:g'}, we get $h''+  2 h' g' - b h' = 0$ in $\R$, which entails that
$$\T[\varphi]=0\ \hbox{ in $\R$}.$$

Let us finally show that the function $\varphi$ is $1$-periodic. Recalling that $\varphi(0)=\varphi(1)=1$ owing to the definition of $C$, it is thus sufficient to show that $\varphi'(0)=\varphi'(1)$, by the Cauchy-Lipschitz theorem. On the one hand, 
$$\varphi'=e^g\left[g'(1+C h)+ C h'\right].$$
On the other hand, since $\phi$ is $1$-periodic, we have 
$$g'(0)=g'(1)=\frac{\gamma\phi'(0)+k_0[\gamma \, b ;b]\phi(0)}{\phi'(0)+\gamma \phi(0)}$$
and $h'(1)=e^{-g(1)}h'(0)$. Thus, owing again to the definition of $C$,
\begin{align*}
\varphi'(1) & = e^{g(1)}\left[g'(1)(1+C h(1))+ C h'(1)\right] \\
& = g'(0) \, e^{g(1)} (1+C h(1)) +  C h'(0)= g'(0) + C h'(0) = \varphi'(0).
\end{align*}
Therefore,~$\varphi$ is $1$-periodic and, since it is positive in $[0,1]$, it is positive in $\R$.

To conclude, $(\varphi, k_0[\gamma b ;b])$  forms a pair consisting of the principal eigenfunction and the principal eigenvalue of the operator $\mathcal{L}_0[ \gamma b ;-b]: \psi \mapsto \psi'' - b \psi' + \gamma b\psi$, with $1$-periodic conditions. By uniqueness of the principal eigenvalue, we deduce that $k_0[\gamma b ;-b] = k_0[\gamma b ;b]$. Furthermore, by uniqueness of the principal eigenfunction of $\mathcal{L}_0[\gamma b;-b]$ up to multiplication, it follows that $\varphi$ is equal to the unique principal eigenfunction of $\mathcal{L}_0[\gamma b;-b]$ satisfying the normalization $\varphi(0)=1$.
\end{proof}

\begin{proof}[Proof of  Theorem~\ref{thm:equality}, item $(ii)$, for the speeds.]
Let $\lambda\in\R$, and let us show that
$$k_{\lambda}[\beta+\gamma b;b]=k_{-\lambda}[\beta+\gamma b;-b].$$
The eigenvalue $k_{\lambda}[\beta+\gamma b;b]$ is associated with the operator
\begin{align*}
\psi\mapsto\mcl_{\lambda}[\beta+\gamma b;b]\psi&=\psi''+(b+2\lambda)\psi'+(\beta+\gamma b+\lambda b+\lambda^2)\psi\\
&=\psi''+(b+2\lambda)\psi'+(\gamma+\lambda)(b+2\lambda)\psi+\beta\psi-(\lambda^2+2\lambda\gamma)\psi
\end{align*}
acting on $1$-periodic functions of class $\mcc^2(\R)$. Therefore, we have
\[k_{\lambda}[\beta+\gamma b;b]=k_{0}[(\gamma+\lambda) (b+2\lambda);b+2\lambda]+\beta-(\lambda^2+2\lambda\gamma).\]
We have already proved that if $\widetilde{r}=\widetilde{\gamma}\widetilde{b}$ for some $\widetilde{\gamma}\in\R$, then $k_0[\widetilde{\gamma}\widetilde{b};\widetilde{b}]=k_0[\widetilde{\gamma}\widetilde{b};-\widetilde{b}]$. Taking $\widetilde{\gamma}=\gamma+\lambda$ and $\widetilde{b}=b+2\lambda$, we get
\[k_{\lambda}[\beta+\gamma b;b]=k_{0}[(\gamma+\lambda) (b+2\lambda);-(b+2\lambda)]+\beta-(\lambda^2+2\lambda\gamma).\]
Hence, $k_{\lambda}[\beta+\gamma b;b]$ is also the principal eigenvalue associated with the operator
\begin{align*}
\psi\ \mapsto
&\ \ \ \ \ \ \psi''-(b+2\lambda)\psi'+(\gamma+\lambda)(b+2\lambda)\psi+\beta\psi-(\lambda^2+2\lambda\gamma)\psi\\
&\ \ =\psi''-(b+2\lambda)\psi'+(\beta+\gamma b+\lambda b+\lambda^2)\psi\\
& \ \ =\mcl_{-\lambda}[\beta+\gamma b;-b]\psi.
\end{align*}
As a consequence, $k_{\lambda}[\beta+\gamma b;b]=k_{-\lambda}[\beta+\gamma b;-b]$ for all $\lambda\in\R$. This implies, using the Freidlin-Gärtner formulas~\eqref{eq:garfre}, that $c_+[\beta+\gamma b;b]=c_-[\beta+\gamma b;-b]$. The proof of Theorem~\ref{thm:equality} is thereby complete.
\end{proof}


\section{Other results on the influence of the direction of the advection term}\label{secadvection}

This section is devoted to the proofs of Propositions~\ref{prop:k0neq},~\ref{ppn:limits_L} and~\ref{ppn:bounded_domain}.


\subsection{General effect of the direction of the advection term: proof of Proposition~\ref{prop:k0neq}}

\begin{proof}[Proof of Proposition \ref{prop:k0neq}]
Let $b\in\mcc^{1,\alpha}(\R)$ be a $1$-periodic function with $b\not\equiv0$ and $\ds\int_0^1b=0$. Let $\lambda\in\R$ and let $\phi>0$ be a $\mcc^{2,\alpha}(\R)$ principal eigenfunction of $\mcl_{\lambda}[r;b]$, where $r\in\mcc^{1,\alpha}(\R)$ is a $1$-periodic function that will be chosen later. Consider the transformation $\phi=F e^{-B/2}$, with $B'=b$. The function $F\in\mcc^2(\R)$ satisfies
\[F''+2\lambda F'+\pth{r-\frac{b^2}{4}-\frac{b'}{2}+\lambda^2}F=k_{\lambda}[r;b]\,F\]
in $\R$. Furthermore, since $\phi$ is $1$-periodic and positive and since~$b$ is $1$-periodic with zero mean, the function~$F$ is also $1$-periodic and positive. The function~$F$ is therefore a principal eigenfunction of the operator $\mathcal{L}_{\lambda}\big[r-\frac{b^2}{4}-\frac{b'}{2};0\big]$. Thus, by uniqueness of the principal eigenvalue, we obtain that (see also~\cite{Nad10})
\[k_{\lambda}[r,b]=k_{\lambda}\cro{r-\frac{b^2}{4}-\frac{b'}{2};0}.\]
Similarly, we have
\[k_{\lambda}[r,-b]=k_{\lambda}\cro{r-\frac{b^2}{4}+\frac{b'}{2}; 0}.\]
Consider now any positive real number $\eta>0$ and set
$$r:=\frac{b^2}{4}+\frac{b'}{2}+\eta,$$
which is $1$-periodic and of class $\mcc^{0,\alpha}(\R)$. Then $k_{\lambda}[r;b]=k_{\lambda}[\eta;0]=\lambda^2+\eta$. Furthermore, since $b\not\equiv0$ and $b$ has zero mean, the function $b'\in\mcc^{0,\alpha}(\R)$ is not constant, so the principal $1$-periodic eigenfunction $\psi$ of the operator $\mathcal{L}_\lambda[b'+\eta;0]$ (which satisfies
\[\psi''+2\lambda\psi'+(b'+\lambda^2+\eta)\psi=k_\lambda[b'+\eta;0]\,\psi\]
in $\R$) is not constant. By integrating this equation against $1/\psi$ over $[0,1]$, one gets
$$k_\lambda[b'+\eta;0]=\int_0^1\frac{(\psi')^2}{\psi^2}+\lambda^2+\eta>\lambda^2+\eta.$$
Thereby, $k_{\lambda}[r;-b]=k_\lambda[b'+\eta;0]> \lambda^2+\eta=k_{\lambda}[r;b]$. Since this holds for all $\lambda\in\R$, we get in particular that $0<\eta=k_{0}[r;b]<k_{0}[r;-b]$. Moreover, since each infimum in~\eqref{eq:garfre} is reached at some $\lambda>0$, we deduce that $c_+[r;b]<c_+[r;-b]$. Last, since $b$ has zero mean, we have $c_+[r;-b]=c_-[r;-b]$ by~\cite[Proposition~3.9]{BKR25}, so $c_+[r;b]<c_-[r;-b]$.
\end{proof}


\subsection{Small and large periods: proof of Proposition~\ref{ppn:limits_L}}

We use the results of~\cite{EHR09,HNR11} to compute more or less explicitly the limits as $L\to0$ and $L\to+\infty$ of $k_0[r_L;b_L]$ and $c_+[r_L;b_L]$. Before that, we extend some results of~\cite{EHR09} to the case where there is a nonzero advection term.

\begin{lemma}\label{lem:limitsLto0}
Let $r\in\mcc^{0,\alpha}(\R)$ and $b\in\mcc^{1,\alpha}(\R)$ be $1$-periodic functions, and note
\[\overline{r}:=\int_0^1r(x)dx\ \hbox{ and }\ \ \overline{b}:=\int_0^1b(x)dx.\]
Then, for every $\lambda\in\R$,
\begin{equation}\label{eq:convk}
k_{\lambda}[r_L;b_L]\mathop{\longrightarrow}_{L\to0}\ \overline{r}+\lambda\overline{b}+\lambda^2.
\end{equation}
If $\overline{r}>0$, then
$$c_{\pm}[r_L;b_L]\mathop{\longrightarrow}_{L\to0}\ \pm\overline{b}+2\sqrt{\overline{r}}.$$
\end{lemma}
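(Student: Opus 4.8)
The plan is to derive the convergence \eqref{eq:convk} of the principal eigenvalues as $L\to0$ and then feed this into the Freidlin-Gärtner formulas \eqref{eq:garfre} to obtain the limits of the spreading speeds. For the eigenvalue convergence, first I would rescale: writing $k_\lambda[r_L;b_L]$ in terms of the $L$-periodic eigenproblem \eqref{klambda} and substituting $x = Ly$, the eigenfunction $\psi$ becomes a $1$-periodic function $\Psi(y):=\psi(Ly)$ solving
\[
\frac1{L^2}\Psi'' + \Big(\frac{b(y)}{L}+2\lambda\Big)\frac1L\Psi' + \big(\lambda^2+\lambda b(y)+r(y)\big)\Psi = k_\lambda[r_L;b_L]\,\Psi,
\]
i.e. $\Psi''+\big(b+2\lambda L\big)\Psi' + L^2\big(\lambda^2+\lambda b+r\big)\Psi = L^2 k_\lambda[r_L;b_L]\,\Psi$. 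As $L\to0$ this is a singular perturbation: the leading operator $\Psi\mapsto\Psi''+b\Psi'$ has principal eigenvalue $0$ (with eigenfunction the constant, since $\int_0^1(\text{const})''+b(\text{const})'=0$ trivially, or rather the adjoint Perron eigenfunction argument), so $L^2 k_\lambda[r_L;b_L]\to0$ and one needs the next order. The clean way is to integrate the equation for $\Psi$ against the adjoint principal eigenfunction $e^{B(y)}$ of $\Psi\mapsto\Psi''+b\Psi'$, where $B'=b$ (note $e^B$ is $1$-periodic up to the factor $e^{\overline b y}$; one works instead with $\int_0^1 e^{B}\,dy$-type weights or, more simply, divides \eqref{klambda} by $\psi$ and integrates). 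Dividing the $\Psi$-equation by $\Psi$ and integrating over $[0,1]$ kills the $b\Psi'/\Psi$ term only up to $\int_0^1 b$, so the cleanest route is: divide by $\Psi$, integrate, and use that $\int_0^1 \Psi''/\Psi = \int_0^1 (\Psi'/\Psi)^2 \ge 0$ and $\int_0^1 \Psi'/\Psi = 0$ by periodicity, yielding
\[
k_\lambda[r_L;b_L] = \int_0^1\!\Big(\lambda^2+\lambda b+r\Big) + \frac{1}{L^2}\int_0^1\Big(\frac{\Psi'}{\Psi}\Big)^2 + \frac1L\int_0^1 b\,\frac{\Psi'}{\Psi}.
\]
Completing the square in the last two terms gives a lower bound $\int_0^1(\lambda^2+\lambda b+r) - \frac14\int_0^1 b^2 = \overline r+\lambda\overline b+\lambda^2 - \frac14\int_0^1 b^2$, which is not sharp; so instead I would use the transformation already recorded in the proof of Proposition~\ref{prop:k0neq}, $\psi = F e^{-B/2}$ after rescaling, to kill the first-order term exactly, reducing to a self-adjoint problem whose principal eigenvalue is $\min_\Psi$ of a Rayleigh quotient; the Rayleigh quotient then manifestly converges to $\overline r + \lambda\overline b+\lambda^2$ as $L\to0$ by testing against the constant and by a standard compactness/lower-semicontinuity argument for the reverse inequality. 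Alternatively, and most economically, one simply cites \cite{EHR09} (extended as announced in the text just before the lemma) for the case $\overline b$ arbitrary: the homogenized limit of the operator family is $\Psi\mapsto \Psi'' + 2\lambda\Psi' + (\overline r + \lambda\overline b + \lambda^2)\Psi$ acting on constants, giving \eqref{eq:convk}.

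Once \eqref{eq:convk} is established, the spreading-speed limit is almost immediate. By \eqref{eq:garfre}, $c_+[r_L;b_L] = \inf_{\lambda>0} k_\lambda[r_L;b_L]/\lambda$. For each fixed $\lambda>0$ we have $k_\lambda[r_L;b_L]/\lambda \to (\overline r + \lambda\overline b+\lambda^2)/\lambda = \overline b + \lambda + \overline r/\lambda$, whose infimum over $\lambda>0$ is attained at $\lambda=\sqrt{\overline r}$ (using $\overline r>0$) and equals $\overline b + 2\sqrt{\overline r}$. To pass the limit through the infimum I would argue: the lim sup of $c_+[r_L;b_L]$ is at most $\overline b+2\sqrt{\overline r}$ by plugging $\lambda=\sqrt{\overline r}$ into the pointwise convergence; for the lim inf, I use the uniform lower bound \eqref{kbornes}, namely $k_\lambda[r_L;b_L]\ge \min_\R(\lambda^2+\lambda b_L + r_L) = \lambda^2 + \min_\R(\lambda b + r)$, which is independent of $L$ and grows like $\lambda^2$, hence the minimizing $\lambda$ in the definition of $c_+[r_L;b_L]$ stays in a fixed compact subinterval $[\lambda_*,\lambda^*]\subset(0,\infty)$ uniformly in small $L$; on such a compact set the convergence $k_\lambda[r_L;b_L]/\lambda \to \overline b+\lambda+\overline r/\lambda$ is uniform (the functions $\lambda\mapsto k_\lambda[r_L;b_L]$ are convex, hence equi-Lipschitz on compact sets by the bounds \eqref{kbornes}, so pointwise convergence upgrades to uniform), giving $\liminf_{L\to0} c_+[r_L;b_L] \ge \inf_{\lambda\in[\lambda_*,\lambda^*]}(\overline b+\lambda+\overline r/\lambda) = \overline b+2\sqrt{\overline r}$. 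The statement for $c_-$ follows identically from the $\lambda\mapsto -\lambda$ version in \eqref{eq:garfre}, replacing $\overline b$ by $-\overline b$.

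The main obstacle is making the singular-perturbation limit \eqref{eq:convk} rigorous and sharp: the naive "divide by $\psi$ and integrate" estimate only gives the non-sharp bound $\overline r+\lambda\overline b+\lambda^2 - \frac14\int_0^1 b^2$ from below, and one genuinely needs either the exact gauge transformation $\psi = Fe^{-B/2}$ to symmetrize the operator (after which the Rayleigh-quotient characterization and testing with the constant function yield both the sharp upper bound and, via weak compactness of the normalized eigenfunctions and lower semicontinuity of the Dirichlet energy, the matching lower bound), or a direct appeal to the homogenization results of \cite{EHR09} with the announced extension to nonzero advection. I would take the self-adjoint-reduction route since it is self-contained and the symmetrization is already used elsewhere in the paper. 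Everything downstream — the transfer to spreading speeds via \eqref{eq:garfre} — is routine given the uniform bounds \eqref{kbornes} and the convexity of $\lambda\mapsto k_\lambda$.
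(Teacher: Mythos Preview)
Your overall strategy is sound, and for the spreading-speed step your argument via convexity and localisation of the minimising $\lambda$ is correct, though the paper does it more economically: it invokes Nadin's result that $L\mapsto k_\lambda[r_L;b_L]$ is non-decreasing, so $\lim_{L\to0}c_\pm[r_L;b_L]=\inf_{L>0}\inf_{\lambda>0}k_{\pm\lambda}[r_L;b_L]/\lambda=\inf_{\lambda>0}\lim_{L\to0}k_{\pm\lambda}[r_L;b_L]/\lambda$, swapping the limits in one line with no compactness needed.

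For the eigenvalue convergence \eqref{eq:convk}, however, your preferred route has a genuine gap when $\overline b\neq0$. The gauge $\psi=Fe^{-B/2}$ with $B'=b$ (or its rescaled analogue) yields a periodic $F$ only if $\int_0^1 b=0$; this hypothesis is explicit in the proof of Proposition~\ref{prop:k0neq} where the transformation is used. When $\overline b\neq0$, $F$ satisfies a twisted condition $F(y+1)=e^{\overline b/2}F(y)$, so you do \emph{not} land on a self-adjoint periodic problem and there is no standard Rayleigh quotient to test against the constant. Symmetrising only against the zero-mean part $b-\overline b$ leaves a residual constant drift $\overline b$ in the first-order term, and the resulting operator is still not self-adjoint on periodic functions. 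So ``testing with the constant plus lower semicontinuity of the Dirichlet energy'' does not apply as stated.

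The paper avoids this entirely by a direct compactness argument that does not rely on self-adjointness: it takes the principal eigenfunction $\phi_L$ of the \emph{adjoint} operator $\mcl_\lambda[r_L;b_L]^*$, normalised in $L^2(0,1)$, multiplies the equation by $\phi_L$ and integrates to obtain an $H^1(0,1)$ bound uniform in $L$ (using \eqref{kbornes} for the eigenvalue bound and Young's inequality to absorb the drift term), then extracts a subsequence converging weakly in $H^1$ and strongly in $\mcc^{0,\beta}$ to a function that must be constant (being a uniform limit of $L_n$-periodic functions with $L_n\to0$), hence equal to $1$. Finally, testing the equation against an arbitrary $v\in\mcc^1_c((0,1))$ and passing to the limit using $r_{L_n}\rightharpoonup\overline r$, $b_{L_n}\rightharpoonup\overline b$ weakly in $L^2$ identifies the limiting eigenvalue as $\overline r+\lambda\overline b+\lambda^2$. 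This works for arbitrary $\overline b$ without any symmetrisation.
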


\begin{proof}
We first show that for every $\lambda\in\R$, $k_{\lambda}[r_L;b_L]\to \overline{r}+\lambda \overline{b}+\lambda^2$ as $L\to 0$. For $L>0$, let $\phi_L\in\mcc^{2,\alpha}(\R)$ be the positive $L$-periodic principal eigenfunction of the adjoint of $\mcl_{\lambda}[r_L;b_L]$, that is,
\begin{equation}\label{eq:phi_L_L0}
  \phi_L''-\cro{(b_L+2\lambda)\phi_L}'+(r_L+\lambda b_L+\lambda^2)\phi_L=k_{\lambda}[r_L;b_L]\,\phi_L
\end{equation}
in $\R$, with normalization
\begin{equation}\label{eq:phi_normalisation}
  \int_0^1\phi_L^2(x)dx=1.
\end{equation}
By multiplying~\eqref{eq:phi_L_L0} by $\phi_L$ and integrating by parts over $[0,1]$, we get:
\begin{equation}\label{eq:intphi}
-\int_0^1(\phi_L')^2+\int_0^1(b_L+2\lambda)\phi_L\phi_L'+\int_0^1(r_L+\lambda b_L+\lambda^2)\phi_L^2=k_{\lambda}[r_L;b_L]\int_0^1\phi_L^2.
\end{equation}
On the other hand, using~\eqref{kbornes} with $(r_L,b_L)$ instead of $(r,b)$, it follows that the family $(k_{\lambda}[r_L;b_L])_{L>0}$ is bounded. Moreover, by Young's inequality, we have
$$|(b_L+2\lambda)\phi_L\phi_L'|\le\frac{(\phi_L')^2}{2}+\frac{(b_L+2\lambda)^2\phi_L^2}{2}.$$
Hence, together with~\eqref{eq:phi_normalisation}-\eqref{eq:intphi}, we conclude that the family $(\phi_L)_{L>0}$ is bounded in $H^1([0,1])$. Therefore, there exist a sequence $(L_n)_{n\in\N}$ of positive real numbers converging to $0$ and a function $\phi\in H^1([0,1])$ such that $\phi_{L_n}\to\phi$ in $\mcc^{0,\beta}([0,1])$ for all $\beta\in[0,1/2)$ (by Morrey's inequality), and $\phi_{L_n}\toweak\phi$ in $H^1([0,1])$ weakly. Last, since each function $\phi_{L_n}$ is $L_n$-periodic, we infer that $\phi\equiv C$ is a non-negative constant with $C^2=1$, that is, $\phi\equiv1$ in $[0,1]$.

Now, multiplying~\eqref{eq:phi_L_L0} (with $L=L_n$) by any function $v\in\mcc^1_c((0,1))$ and integrating by parts over $[0,1]$, we get:
\begin{equation}\label{eq:weak_phi_L}
-\int_0^1\phi_{L_n}'v'+\int_0^1(b_{L_n}+2\lambda)\phi_{L_n}v'+\int_0^1(r_{L_n}+\lambda b_{L_n}+\lambda^2)\phi_{L_n}v=k_{\lambda}[r_{L_n};b_{L_n}]\int_0^1\phi_{L_n}v.
\end{equation}
The sequence $(k_{\lambda}[r_{L_n};b_{L_n}])_{n\in\N}$ is bounded, so there exists $\kappa\in\R$ such that, up to extraction of a subsequence, $k_{\lambda}[r_{L_n};b_{L_n}]\to\kappa$ as $n\to+\infty$. As $n\to+\infty$, we have $r_{L_n}\toweak\overline{r}$ weakly in $L^2([0,1])$ and $b_{L_n}\toweak\overline{b}$ weakly in $L^2([0,1])$. Hence, taking the limit $n\to+\infty$ in~\eqref{eq:weak_phi_L} and using the fact that the sequence $(\phi_{L_n})_{n\in\N}$ converges to $1$ strongly in (at least) $L^2([0,1])$ and weakly in $H^1([0,1])$, we infer that
\begin{equation*}
\kappa\int_0^1v=\int_0^1(\overline{b}+2\lambda)v'+\int_0^1(\overline{r}+\lambda \overline{b}+\lambda^2)v=(\overline{r}+\lambda \overline{b}+\lambda^2)\int_0^1v.
\end{equation*}
Since this holds for every $v\in\mcc^1_c((0,1))$, we obtain that $\kappa=\overline{r}+\lambda \overline{b}+\lambda^2$. This value~$\kappa$ does not depend on any sequence $(L_n)_{n\in\N}$ converging to $0$, so we deduce that the whole bounded family $(k_{\lambda}[r_L;b_L])_{L>0}$ converges to $\overline{r}+\lambda \overline{b}+\lambda^2$ as $L\to0$, that is,~\eqref{eq:convk} is proved.

Let us now show the convergence of $c_+[r_L;b_L]$ as $L\to0$, assuming that $\overline{r}>0$, that is, $\lim_{L\to0}k_0[r_L;b_L]>0$. For each $\lambda\in\R$, the function $L\mapsto k_{\lambda}[r_L;b_L]$ is non-decreasing, by~\cite[Proposition 4.1]{Nad10}.\footnote{The part of the proof of~\cite[Proposition 4.1]{Nad10} related to the monotonicity of $k_{\lambda}[r_L;b_L]$ with respect to $L>0$ actually holds for any $1$-periodic $b\in\mcc^{1,\alpha}(\R)$, even not divergence-free, that is, here, not constant.} Therefore, $k_0[r_L;b_L]>0$ for all $L>0$, and the functions $L\mapsto c_\pm[r_L;b_L]$ are non-decreasing. Hence
$$\lim_{L\to 0}c_{\pm}[r_L;b_L]=\inf_{L> 0}c_{\pm}[r_L;b_L]=\inf_{L>0,\, \lambda>0}\frac{k_{\pm\lambda}[r_L;b_L]}{\lambda}=\inf_{\lambda>0}\Big(\lim_{L\to 0}\frac{k_{\pm\lambda}[r_L;b_L]}{\lambda}\Big).$$
From~\eqref{eq:convk}, we conclude that
$$\lim_{L\to 0}c_{\pm}[r_L;b_L]=\inf_{\lambda>0}\Big(\frac{\overline{r}}{\lambda}\pm\overline{b}+\lambda\Big)=\pm\overline{b}+2\sqrt{\overline{r}},$$
as stated. 
\end{proof}

\begin{proof}[Proof of Proposition~\ref{ppn:limits_L}, for $L\to0$]
Using Lemma~\ref{lem:limitsLto0}, we note that the limit as $L\to0$ of $k_0[r_L;\pm b_L]$ is $\overline{r}$ and does not depend on whether we take $+b_L$ or $-b_L$. Replacing $b$ by $-b$ when needed in Lemma~\ref{lem:limitsLto0}, we also obtain the two limits~\eqref{limL0}.
\end{proof}

\begin{proof}[Proof of Proposition~\ref{ppn:limits_L}, for $L\to+\infty$]
Assume that $b$ has zero mean, that is, $\overline{b}=0$. First, using~\cite[Proposition~3.2]{HNR11}, we have 
\[k_0[r_L;\pm b_L]\mathop{\longrightarrow}_{L\to+\infty}\ M:=\max_{x\in\R}\pth{r(x)-\frac{b(x)^2}{4}}.\]
Hence $k_0[r_L; b_L]$ and $k_0[r_L;-b_L]$ converge to the same limit. Second, assuming $M>0$, both quantities $k_0[r_L; b_L]$ and $k_0[r_L;-b_L]$ are positive for all $L$ large enough, and using~\cite[Theorem~2.3]{HNR11} we have
\[c_+[r_L;\pm b_L]\mathop{\longrightarrow}_{L\to+\infty}\ \min_{\lambda\ge j(M),\,\lambda>0}\frac{j^{-1}(\lambda)}{\lambda},\]
where $j:[M,+\infty)\to[j(M),+\infty)$ is the bijective function defined by
\[j(k):=\int_0^1\sqrt{k-r(x)+\frac{b(x)^2}{4}}dx.\]
Hence, $\lim_{L\to+\infty}c_+[r_L; b_L]=\lim_{L\to+\infty}c_+[r_L;- b_L]$. Last, by~\eqref{c-=c+}, $c_+[r_L;\pm b_L]=c_-[r_L;\pm b_L]$ for all $L$ large enough. The conclusion of Proposition~\ref{ppn:limits_L} follows.
\end{proof}


\subsection{A counter-example to Theorem~\ref{thm:equality} in bounded domains: proof of Proposition~\ref{ppn:bounded_domain}}

\begin{proof}[Proof of Proposition~\ref{ppn:bounded_domain}]
  Let $\mcl_{0}[r;b]$ be the usual operator, but with Dirichlet
  boundary conditions, see~\eqref{eq:dir}. Since $b(x)=x$, we have
$$\mcl_0[r;b]\phi=\phi''+x\phi'+r\phi=\phi''-(-x\phi)'+(r-1)\phi$$
  for any $\phi\in\mcc^2([-1,1])$. Using the Dirichlet boundary conditions, 
  we deduce that:
$$\mcl_0[r;b]=\mcl_0^*[r;-b]-1.$$
  Since the operators $\mcl_0[r;-b]$ and $\mcl_0^*[r;-b]$ (both with Dirichlet boundary conditions) 
  have the same principal eigenvalue, we conclude that $k_0^{d}[r;b]=k_0^{d}[r;-b]-1$. 
\end{proof}


\section{Relationship between the spreading speed and the ability of persistence: proof of Theorem~\ref{thm:no_general_comparison}}\label{s:counter_examples_cvsk}

This section is devoted to the proof of Theorem~\ref{thm:no_general_comparison}. Throughout this section, we assume that $b=0$ and, as shortcuts, we then write, for any $1$-periodic function $r\in\mcc^{0,\alpha}(\R)$,
$$k_\lambda[r]:=k_\lambda[r;0],\ \ \mathcal{L}_\lambda[r]:=\mathcal{L}_\lambda[r;0],\ \hbox{ and }\ c_\pm[r]:=c_\pm[r;0].$$
Hence, if $k_0[r]>0$, then $c_-[r]=c_+[r]>0$, by~\eqref{c-=c+}.

\begin{proof}[Proof of Theorem~\ref{thm:no_general_comparison}, items~$(i)$ and $(ii)$]
Since $(ii)$ is stronger than $(i)$, we directly show $(ii)$. Let $M>0$ be any fixed positive real number. Define
$$R:=M+16\pi^2>0.$$
For any $A\in\R$, we then consider the Lipschitz-continuous $1$-periodic function $\widetilde{r}^A$ such that
$$\widetilde{r}^A(x):=\left\{\begin{array}{ll}
R & \text{for $0\leq x\le1/4$},\vspace{3pt}\\
R+4(A-R)(x-1/4) & \text{for $1/4<x<1/2$},\vspace{3pt}\\
A & \text{for $1/2\leq x\le3/4$},\vspace{3pt}\\
R+4(A-R)(1-x) & \text{for $3/4<x<1$}\end{array}\right.$$
(this function is then extended by $1$-periodicity in $\R$). For each $\lambda\in\R$, let $k_{\lambda,\, dir}$ be the principal Dirichlet eigenvalue of
\begin{equation}\label{eq:eig_pb_dir}
\left\{\begin{array}{ll}
\psi''+2\lambda\psi'+(R+\lambda^2)\psi=k_{\lambda}^{d}\psi & \text{in $[0,1/4]$},\vspace{3pt}\\
\psi>0 & \text{in $(0,1/4)$},\vspace{3pt}\\
\psi(0)=\psi(1/4)=0,&
\end{array}
\right.
\end{equation}
associated with the $\mcc^\infty([0,1/4])$ principal eigenfunction $\psi:x\mapsto e^{-\lambda x}\sin(4\pi x)$. By uniqueness of the principal eigenvalue, it follows that $k_{\lambda}^{d}=R-16\pi^2$. On the other hand, from~\cite[Equation 1.13]{BNV94}, 
\begin{equation}\label{klambdadir}
k_{\lambda}^{d}=\min_{\varphi\in W^{2,1}_{loc}((0,1/4)),\,\varphi>0\hbox{\small{ in }}(0,1/4)}\pth{\mathop{\mathrm{esssup}}_{x\in(0,1/4)}\frac{\varphi''(x)+2\lambda\varphi'(x)+(R+\lambda^2)\varphi(x)}{\varphi(x)}}
\end{equation}
and the minimum with respect to $\varphi$ is reached only by the positive multiples of $\psi$. Hence $k_{\lambda}^{d}<k_\lambda[\widetilde{r}^A]$ for each $A\in\R$, since $\widetilde{r}^A=R$ in $[0,1/4]$ and $k_\lambda[\widetilde{r}^A]$ is associated with a positive $1$-periodic principal eigenfunction of $\mathcal{L}_\lambda[\widetilde{r}^A]$, whose restriction to $[0,1/4]$ is therefore still positive in $[0,1/4]$. In other words, for every $A\in\R$,
$$k_\lambda[\widetilde{r}^A]>R-16\pi^2=M.$$
In particular, $k_0[\widetilde{r}^A]>M>0$, which is one the two required conclusions of~\eqref{ineq:M}.

Our goal is now to show that $c_-[\widetilde{r}^A]=c_+[\widetilde{r}^A]<1/M$ for all $A<0$ negative enough. To do so, for any fixed $\lambda\in\R$, let us first compute the limit of $k_\lambda[\widetilde{r}^A]$ as $A\to-\infty$. If $A_1\le A_2$, then $\widetilde{r}^{A_1}\le\widetilde{r}^{A_2}$ in $\R$, whence $k_\lambda[\widetilde{r}^{A_1}]\le k_\lambda[\widetilde{r}^{A_2}]$. Therefore, there exists a limit
$$\ell:=\lim_{A\to-\infty}k_\lambda[\widetilde{r}^A],$$
such that $k_\lambda[\widetilde{r}^0]\ge\ell\ge R-16\pi^2=M$. We show in what follows that $\ell=R-16\pi^2$ (as a consequence,~$\ell$ is independent of $\lambda$). For $A\le0$, let $\phi_A>0$ be $1$-periodic of class $\mcc^2(\R)$ and satisfy
\begin{equation}\label{eq:eigv_pb_phiA}
\phi''_A+2\lambda\phi_A'+(\widetilde{r}^A+\lambda^2)\phi_A=k_\lambda[\widetilde{r}^A]\phi_A
\end{equation}
with normalization $\|\phi_A\|_{L^2([0,1])}=1$. Integrating~\eqref{eq:eigv_pb_phiA} against $\phi_A$ over $[0,1]$, we get:
\[k_\lambda[\widetilde{r}^A]=-\int_0^1(\phi'_A)^2+\lambda\underbrace{\int_0^1(\phi^2_A)'}_{=0}+\lambda^2+\int_0^1\underbrace{\widetilde{r}^A}_{\le R}\phi_A^2\le-\int_0^1(\phi'_A)^2+\lambda^2+R.\]
Since the family $(k_\lambda[\widetilde{r}^A])_{A\le0}$ is bounded (it ranges in $[M,k_\lambda[\widetilde{r}^0]]$) and since $\|\phi_A\|_{L^2([0,1])}=1$, we infer that $(\phi_A)_{A\le0}$ is bounded in $H^1([0,1])$. Therefore, there exist a sequence $(A_n)_{n\in\N}$ of non-positive real numbers diverging to $-\infty$ and a non-negative $1$-periodic function $\phi\in H^1_{loc}(\R)$ such that $\phi_{A_n}\to\phi$ in $\mcc^{0,\beta}(\R)$ for every $\beta\in[0,1/2)$ and $\phi_{A_n}\toweak\phi$ weakly in $H^1([0,1])$, as $n\to+\infty$. Furthermore,  for any $1/4<a<b<1$, $\widetilde{r}^A\to-\infty$ uniformly in $[a,b]$ as $A\to-\infty$, while
$$\begin{array}{rcl}
\displaystyle k_\lambda[\widetilde{r}^A]=\displaystyle-\int_0^1(\phi'_A)^2+\lambda^2+\int_0^1\widetilde{r}^A\phi_A^2 & \le & \displaystyle\lambda^2+R\int_0^a\phi_A^2+\int_a^b\widetilde{r}^A\phi_A^2+R\int_b^1\phi_A^2\vspace{3pt}\\
& \le & \displaystyle\lambda^2+R+\int_a^b\widetilde{r}^A\phi_A^2\end{array}$$
for every $A\le0$. Using again the boundedness of the family $(k_\lambda[\widetilde{r}^A])_{A\le0}$, we get
$$\limsup_{A\to-\infty}\int_a^b|\widetilde{r}^A|\phi_A^2<+\infty,$$
whence $\int_a^b\phi^2=0$. Since this holds for every $1/4<a<b<1$ and $\phi\in H^1_{loc}(\R)$ is (at least) continuous, it follows that $\phi=0$ in $[1/4,1]$, and then in $[-3/4,0]$ by $1$-periodicity. In particular, the restriction of $\phi$ to $[0,1/4]$ belongs to $H^1_0([0,1/4])$. Finally, by integrating~\eqref{eq:eigv_pb_phiA} (with $A=A_n$) against any function $v\in\mcc^1_c((0,1/4))$ and taking the limit as $n\to+\infty$, we get:
\[-\int_0^{1/4}v'\phi'+2\lambda\int_0^{1/4}v\phi'+(R+\lambda)^2\int_0^{1/4}v\phi=\ell\int_0^{1/4}v\phi.\]
Therefore, $\phi$ is a weak $H^1_0([0,1/4])$ and then a strong $H^2([0,1/4])\cap H^1_0([0,1/4])$ and then a $\mcc^\infty([0,1/4])$ solution of
$$\left\{\begin{array}{ll}
\phi''+2\lambda\phi'+(R+\lambda^2)\phi=\ell\,\phi & \text{in $[0,1/4]$},\vspace{3pt}\\
\phi\geq 0 & \text{in $[0,1/4]$},\vspace{3pt}\\
\phi(0)=\phi(1/4)=0.& \end{array}\right.$$
But $\|\phi\|_{L^2([0,1/4])}=\|\phi\|_{L^2([0,1])}=1$ (because $\phi=0$ in $[0,1/4]$), whence $\phi>0$ in $(0,1/4)$ from the strong maximum principle. By uniqueness of the principal eigenvalue of this problem, $\ell$ is then equal to the principal eigenvalue $k_{\lambda}^{d}=R-16\pi^2$ of~\eqref{eq:eig_pb_dir}, that is, $\ell=R-16\pi^2$. Since $R-16\pi^2$ does not depend on any sequence of the bounded family $(k_\lambda[\widetilde{r}^A])_{A\le0}$, we conclude that, for every $\lambda\in\R$,
$$k_\lambda[\widetilde{r}^A]\mathop{\longrightarrow}_{A\to-\infty}R-16\pi^2=M.$$
Lastly, pick any $\lambda_0>M^2$. One then has $(R-16\pi^2)/\lambda_0=M/\lambda_0<1/M$. Thus,
$$\frac{k_{\lambda_0}[\widetilde{r}^A]}{\lambda_0}<\frac1M$$
for all $A<0$ negative enough, whence
$$0<c_+[\widetilde{r}^A]=c_-[\widetilde{r}^A]\le\frac{k_{\lambda_0}[\widetilde{r}^A]}{\lambda_0}<\frac1M$$
for all $A<0$ negative enough, by~\eqref{c-=c+} and~\eqref{eq:garfre} (remember also that $k_0[\widetilde{r}^A]>M>0$). Any function $\widetilde{r}:=\widetilde{r}^A$ for $A<0$ negative enough then satisfies~\eqref{ineq:M}. The proof of items~$(i)$-$(ii)$ of Theorem~\ref{thm:no_general_comparison} is thereby complete.
\end{proof}

\begin{proof}[Proof of Theorem~\ref{thm:no_general_comparison}, item $(iii)$]
We assume here that $r$ is constant, whence $r$ is a positive constant (since $0<k_0[r]=r$) and $c_\pm[r]=2\sqrt{r}$. Let $\widetilde{r}\in\mcc^{0,\alpha}(\R)$ be a non-constant $1$-periodic function such that
$$0<k_0[\widetilde{r}]\le k_0[r]=r.$$
The quantity $c_-[\widetilde{r}]=c_+[\widetilde{r}]$ is positive by~\eqref{c-=c+}. Our goal is to show that $c_+[\widetilde{r}]<c_+[r]=2\sqrt{r}$.

First of all, for every $\lambda\in\R$, it follows from~\cite[Theorem~2.2]{Nad10} that
\begin{equation}\label{eq:var_nadin}
k_\lambda[\widetilde{r}]=\min_{\mu\in\mcc^1(\R),\, \mu\text{ is $1$-periodic}}k_0[\widetilde{r}+\lambda^2+(\mu')^2+2\lambda\mu']  
\end{equation}
Taking $\mu\equiv 0$, we find
$$k_{\lambda}[\widetilde{r}]\leq k_0[\widetilde{r}+\lambda^2]=k_0[\widetilde{r}]+\lambda^2\le k_0[r]+\lambda^2=r+\lambda^2.$$
Together with~\eqref{c-=c+} and~\eqref{eq:garfre}, this already shows that $c_+[\widetilde{r}]\le2\sqrt{r}=c_+[r]$. The goal of the next paragraph is to prove that strict inequality holds.

Consider now any $\lambda>0$. By the proof of~\cite[Theorem~2.2]{Nad10} (see in particular~\cite[Proposition 2.6]{Nad10}), the minimum in~\eqref{eq:var_nadin} is actually reached at
\begin{equation}\label{defmum}
\mu_m:=\frac{1}{2}\ln\Big(\frac{\phi}{\psi}\Big),
\end{equation}
where $(\phi,\psi)$ is the unique pair (up to multiplication of each function by positive constants) of $1$-periodic positive $\mcc^{2,\alpha}(\R)$ functions solving the following pair of adjoint equations (recall that an operator and its adjoint have the same principal eigenvalue):
\[
\left\{
\begin{aligned}
  &\phi''+2\lambda\phi'+(\widetilde{r}+\lambda^2)\phi\ =k_{\lambda}[\widetilde{r}]\phi,\\
  &\psi''-2\lambda\psi'+(\widetilde{r}+\lambda^2)\psi=k_{\lambda}[\widetilde{r}]\psi.
\end{aligned}
\right.
\]
If the function $\mu_m$ were constant, the functions $\phi$ and $\psi$ would be identical up to multiplication, and therefore equal without loss of generality. We would then have $-2\lambda\phi'=2\lambda\phi'$ in $\R$, and $\phi$ (and $\psi$) would then be constant, which is impossible since $\widetilde{r}$ is not constant. As a consequence, $\mu_m$ is not constant. Further, by~\cite{Pin95}, the function
$$\nu\mapsto g(\nu):=k_0[\widetilde{r}+\lambda^2+\nu^2+2\lambda\nu]$$
is convex and analytic in the class of continuous $1$-periodic functions $\nu$. For any such functions $\nu$ and $\omega$ with $\omega\not\equiv0$, and for any $t\in\R$, $g(\nu+t\omega)$ is the principal eigenvalue of the operator $\mathcal{L}_0[\widetilde{r}+\lambda^2+(\nu+t\omega)^2+2\lambda(\nu+t\omega)]$. By integrating the equation satisfied by a corresponding principal $1$-periodic eigenfunction $\varphi$ against $1/\varphi$ over $[0,1]$, it follows that
\begin{align*}
  g(\nu+t\omega)&=k_0[\widetilde{r}+\lambda^2+(\nu+t\omega)^2+2\lambda(\nu+t\omega)]\\
  &\ge\int_0^1\big(\widetilde{r}+\lambda^2+(\nu+t\omega)^2+2\lambda(\nu+t\omega)\big),
\end{align*}
whence $\liminf_{t\to\pm\infty}g(\nu+t\omega)/t^2>0$. Hence, the analytic function $t\mapsto g(\nu+t\omega)$ is convex and not affine, so it is strictly convex. As a consequence, $g$ is strictly convex (in the sense that $g((\nu_1+\nu_2)/2)<(g(\nu_1)+g(\nu_2))/2$ for any two distinct continuous $1$-periodic functions $\nu_1$ and $\nu_2$). Therefore, the minimum in~\eqref{eq:var_nadin} is reached only at the function $\mu_m$ defined in~\eqref{defmum}, up to additive constants. Since $\mu_m$ is not constant, we infer that the function $0$ does not minimize~\eqref{eq:var_nadin}, whence
$$k_\lambda[\widetilde{r}]=k_0[\widetilde{r}+\lambda^2+(\mu_m')^2+2\lambda\mu_m']<k_0[\widetilde{r}+\lambda^2]=k_0[\widetilde{r}]+\lambda^2.$$
This implies that $k_{\lambda}[\widetilde{r}]<k_0[r]+\lambda^2$ for every $\lambda>0$, and we conclude that
\[c_+[\widetilde{r}]=\min_{\lambda>0}\frac{k_{\lambda}[\widetilde{r}]}{\lambda}<\min_{\lambda>0}\frac{k_0[r]+\lambda^2}{\lambda}=2\sqrt{k_0[r]}=2\sqrt{r}=c_+[r].\]
This completes the proof of item~$(iii)$ of Theorem~\ref{thm:no_general_comparison}.
\end{proof}

\begin{proof}[Proof of Theorem~\ref{thm:no_general_comparison}, item $(iv)$]
Item~$(iv)$ is a consequence of item~$(iii)$ (where the roles of~$r$ and $\widetilde{r}$ are reversed). Namely, we assume here that~$r$ is non-constant. Since $0<k_0[r]=k_0[k_0[r]]$, viewing $k_0[r]$ as a constant function and applying $(iii)$ with the pair $(r,k_0[r])$ in place of $(\widetilde{r},r)$, we get
\[0<c_+[r]<c_+[k_0[r]]=2\sqrt{k_0[r]}.\]
This proves the first part of item~$(iv)$.

Now, for $\eps>0$ small enough, we have both
\[k_0[r]>k_0[r]-\eps=k_0[k_0[r]-\eps]>0\]
and
\[0<c_+[r]<2\sqrt{k_0[r]-\eps}=c_+[k_0[r]-\eps].\]
Hence, the second part of item~$(iv)$ holds with the positive constant $\widetilde{r}:=k_0[r]-\eps$.
 \end{proof}


\appendix

\section{A condition for local convergence to the positive equilibrium state $p$}\label{s:proof_persistence_general_b}

We here derive a condition more general than $k_0[r;0]>0$ in the case $b=0$, which guarantees local convergence to the $1$-periodic positive steady state $p$ as $t\to+\infty$.

\begin{proposition}\label{ppn:persistence_general_b}
If
\begin{equation}\label{eq:persist2}
k_0\cro{r-\frac{b'}{2}-\frac{b^2}{4};0}>0,
\end{equation}
then~\eqref{eq:persist_cond} holds and $u(t,\cdot)\to p$ as $t\to+\infty$ locally uniformly in $\R$, for any continuous non-negative compactly supported initial condition $u_0\not\equiv0$. In particular, if
$$4\int_0^1\!\!r\,>\,\int_0^1b^2,$$
then~\eqref{eq:persist2} holds.
\end{proposition}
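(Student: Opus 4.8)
The plan is to reduce everything to the advection-free case $b=0$ by a gauge transformation, and then invoke the Freidlin--Gärtner formulas~\eqref{eq:garfre} together with the stabilization result for periodic KPP equations recalled after~\eqref{defcpm}. Write $\overline b:=\int_0^1 b$ and $\tilde r:=r-b'/2-b^2/4$, which is $1$-periodic and of class $\mcc^{0,\alpha}(\R)$, so that assumption~\eqref{eq:persist2} reads $k_0[\tilde r;0]>0$. The first and main step is to establish the identity
\[ k_\lambda[r;b] = k_{\lambda+\overline b/2}[\tilde r;0]\qquad\text{for every }\lambda\in\R. \]
To prove it, I would take a positive $1$-periodic principal eigenfunction $\psi$ of $\mcl_\lambda[r;b]$, set $Q(x):=\tfrac12\int_0^x(b-\overline b)$ --- which is $1$-periodic and of class $\mcc^{2,\alpha}$ because $b-\overline b$ is $1$-periodic with zero mean --- and consider $\tilde\psi:=e^{Q}\psi>0$, which is again $1$-periodic. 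A direct (if slightly tedious) computation using $Q'=(b-\overline b)/2$, $Q''=b'/2$, and the algebraic identity $-\tfrac{(b-\overline b)^2}{4}-\tfrac{\overline b(b-\overline b)}{2}=-\tfrac{b^2}{4}+\tfrac{\overline b^2}{4}$ then shows that $\tilde\psi$ solves $\mcl_{\lambda+\overline b/2}[\tilde r;0]\tilde\psi=k_\lambda[r;b]\tilde\psi$ in $\R$, and uniqueness of the principal eigenvalue gives the identity. (For $\overline b=0$ this is exactly the reduction already used in the proof of Proposition~\ref{prop:k0neq}; see also~\cite{Nad10}.)

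Next I would combine this identity with two elementary facts about $\mu\mapsto k_\mu[\tilde r;0]$: it is convex (as recalled after~\eqref{eq:garfre}) and even, since $\mcl_\mu[\tilde r;0]$ and $\mcl_{-\mu}[\tilde r;0]$ are formal adjoints and hence share their principal eigenvalue. Consequently $k_\mu[\tilde r;0]\ge k_0[\tilde r;0]$ for every $\mu\in\R$, so under~\eqref{eq:persist2} we get $k_\lambda[r;b]\ge k_0[\tilde r;0]>0$ for \emph{every} $\lambda\in\R$. In particular $k_0[r;b]>0$, i.e.~\eqref{eq:persist_cond} holds and the spreading speeds $c_\pm[r;b]$ are finite and given by~\eqref{eq:garfre}. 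Moreover, as noted after~\eqref{kbornes}, the infima in~\eqref{eq:garfre} are attained at some $\lambda_\pm>0$, whence $c_+[r;b]=k_{\lambda_+}[r;b]/\lambda_+>0$ and $c_-[r;b]=k_{-\lambda_-}[r;b]/\lambda_->0$.

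To finish, I would use that both spreading speeds are positive: the open interval $(-c_-[r;b],c_+[r;b])$ contains $0$ and points on both sides of it, so taking $w:=-c_-[r;b]/2<0<c_+[r;b]/2=:w'$, the stabilization statement recalled after~\eqref{defcpm} gives $\max_{wt\le x\le w't}|u(t,x)-p(x)|\to0$ as $t\to+\infty$, for any continuous non-negative compactly supported $u_0\not\equiv0$. Since any compact interval $[-K,K]$ is contained in $[wt,w't]$ for all $t$ large, this yields $\sup_{[-K,K]}|u(t,\cdot)-p|\to0$, and letting $K\to+\infty$ gives $u(t,\cdot)\to p$ locally uniformly in $\R$. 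For the last assertion, integrating $\chi''+\tilde r\chi=k_0[\tilde r;0]\chi$ against $1/\chi$ over a period ($\chi>0$ being the principal eigenfunction) gives $k_0[\tilde r;0]=\int_0^1\tilde r+\int_0^1(\chi'/\chi)^2\ge\int_0^1\tilde r=\int_0^1 r-\tfrac14\int_0^1 b^2$, using $\int_0^1 b'=0$; hence $4\int_0^1 r>\int_0^1 b^2$ implies~\eqref{eq:persist2}.

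The main obstacle is the first step when $\overline b\neq0$: the naive gauge change $\psi\mapsto e^{B/2}\psi$ (with $B'=b$) removes the advection but turns the periodicity condition into the Floquet-type condition $w(x+1)=e^{\overline b/2}w(x)$, and the real point is to recognize this Floquet eigenvalue of $\mcl_\lambda[\tilde r;0]$ as the \emph{periodic} eigenvalue $k_{\lambda+\overline b/2}[\tilde r;0]$ --- which is what dictates the careful bookkeeping of the quadratic-in-$\lambda$ terms described above. Everything downstream (convexity and evenness of $\mu\mapsto k_\mu[\tilde r;0]$, attainment of the infima, and the large-time behaviour of the Cauchy problem) is a routine assembly of already-quoted facts.
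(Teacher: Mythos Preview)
Your proof is correct and takes a genuinely different route from the paper's. The paper works through Dirichlet eigenvalues $k_0^d(R)$ on $[-R,R]$: it applies the gauge change $\phi\mapsto\phi\,e^{B/2}$ (with $B'=b$) at the Dirichlet level to identify $k_0^d(R)$ with the Dirichlet eigenvalue of the self-adjoint operator $\psi\mapsto\psi''+\tilde r\psi$, then invokes \cite[Lemma~3.6]{BerHamRoq05a} to get $k_0^d(\infty)=k_0[\tilde r;0]$ and finally the proof of \cite[Theorem~2.6]{BerHamRoq05a} for the local convergence to $p$. You instead perform the gauge change directly on the periodic problem, subtracting $\overline b$ so that $Q$ stays $1$-periodic, and obtain the sharper identity $k_\lambda[r;b]=k_{\lambda+\overline b/2}[\tilde r;0]$ valid for \emph{all} $\lambda$; combined with the evenness and convexity of $\mu\mapsto k_\mu[\tilde r;0]$ this yields $k_\lambda[r;b]\ge k_0[\tilde r;0]>0$ for every $\lambda$, hence $c_\pm[r;b]>0$, and the local convergence then drops out of the spreading statement from~\cite{Wei02} already quoted after~\eqref{defcpm}. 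Your argument is more self-contained (it uses only facts stated in the paper and avoids the Dirichlet machinery of~\cite{BerHamRoq05a}), and as a by-product it proves the stronger fact that both spreading speeds are strictly positive under~\eqref{eq:persist2}; the paper's approach, on the other hand, makes explicit the link between the periodic eigenvalue $k_0[\tilde r;0]$ and the generalized Dirichlet eigenvalue $k_0^d(\infty)$, which is the quantity governing local persistence in the framework of~\cite{BerHamRoq05a}.
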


\begin{proof}
As observed in~\cite{BerHamRoq05a} in the case $b=0$, the important quantity for local extinction or local convergence to a positive stationary state is not the periodic principal eigenvalue $k_0[r;b]$; it is rather the limit of Dirichlet eigenvalues in increasing bounded domains. Namely, for $R>0$, let $k_0^d(R)$ be the principal eigenvalue of the Dirichlet eigenproblem
\begin{equation}\label{eq:eigenproblem_dirichlet}
\begin{cases}
  \phi''+b\phi'+r\phi=k_0^d(R)\,\phi&\text{in $[-R,R]$},\\
  \phi>0&\text{in $(-R,R)$},\\
  \phi(-R)=\phi(R)=0,
\end{cases}
\end{equation}
associated with $\mcc^{2,\alpha}([-R,R])$ eigenfunctions $\phi$. By~\cite[Equation 1.13]{BNV94}, as in~\eqref{klambdadir}, there holds
\[k_0^d(R)
=\min_{\psi\in W^{2,1}_{loc}((-R,R)),\,\psi>0\hbox{\small{ in }}(-R,R)}\pth{\mathop{\mathrm{esssup}}_{x\in(-R,R)}\frac{\psi''(x)+b(x)\psi'(x)+r(x)\psi(x)}{\psi(x)}}\]
 for each $R>0$, and the minimum with respect to $\psi$ is reached only by the positive multiples of the principal eigenfunction $\phi$ of~\eqref{eq:eigenproblem_dirichlet}. Hence the function $R\mapsto k_0^d(R)$ is non-decreasing, bounded, and $k_0^d(R)<k_0[r;b]$, since $k_0[r;b]$ is associated with a positive $1$-periodic principal eigenfunction, whose restriction to $[-R,R]$ is therefore still positive in $[-R,R]$. Thus, there exists a limit
\[k_0^d(\infty):=\lim_{R\to+\infty}k_0^d(R)\ \in\R,\]
such that
\begin{equation}\label{ineqk0}
k_0^d(\infty)\le k_0[r;b].
\end{equation}
Let us prove that
\begin{equation}\label{eq:equality_kdir_kper}
k_0^d(\infty)=k_0\!\cro{r-\frac{b'}{2}-\frac{b^2}{4};0},
\end{equation}
where, using the notations of the introduction, $k_0\big[r-\frac{b'}{2}-\frac{b^2}{4};0\big]$ is the periodic principal eigenvalue associated with the operator~$\mathcal{T}$ defined by
\[\mathcal{T}\,:\,\phi\mapsto\phi''+\pth{r-\frac{b'}{2}-\frac{b^2}{4}}\phi.\]
To show~\eqref{eq:equality_kdir_kper}, let, for each $R>0$, $\phi_R\in\mcc^{2,\alpha}([-R,R])$ be a positive principal eigenfunction of~\eqref{eq:eigenproblem_dirichlet}, and set $\psi_R=\phi_R\,e^{B/2}$, with $B'=b$. As in the proof of Proposition~\ref{prop:k0neq}, the $\mcc^{2,\alpha}([-R,R])$ function $\psi_R$ solves $\mathcal{T}\psi_R=k_0^d(R)\,\psi_R$ in $[-R,R]$, with $\psi_R>0$ in $(-R,R)$ and ${\psi_R(\pm R)=0}$. Thus, $k_0^d(R)$ is the Dirichlet principal eigenvalue of~$\mathcal{T}$ in $[-R,R]$. Since the operator $\mathcal{T}$ is self-adjoint and has periodic coefficients, it now follows from \cite[Lemma~3.6]{BerHamRoq05a} that $k_0^d(R)$ converges, as $R\to+\infty$, to the periodic principal eigenvalue of~$\mathcal{T}$, which is $k_0\big[r-\frac{b'}{2}-\frac{b^2}{4};0\big]$. To sum up, we have
\begin{equation}\label{eq:limit}
k_0^d(R)\mathop{\longrightarrow}_{R\to+\infty}k_0\cro{r-\frac{b'}{2}-\frac{b^2}{4};0}.
\end{equation}
Therefore~\eqref{eq:equality_kdir_kper} holds.

Thus, a comparison between $k_0\big[r-\frac{b'}{2}-\frac{b^2}{4};0\big]$ and $0$ is equivalent to a comparison between $k_0^d(\infty)$ and $0$. The proof of local convergence of the solutions~$u$ to~\eqref{eq:KPP} with continuous non-negative compactly supported initial conditions $u_0\not\equiv0$ to a stationary state if $k_0\big[r-\frac{b'}{2}-\frac{b^2}{4};0\big]>0$ is the same as the proof of~\cite[Theorem 2.6]{BerHamRoq05a} (replacing the notation there $\lambda_1$ by our notation $-k_0^d(\infty)$). In particular, the positivity of $k_0\big[r-\frac{b'}{2}-\frac{b^2}{4};0\big]$ rules out the extinction of these solutions. Notice also that $k_0\big[r-\frac{b'}{2}-\frac{b^2}{4};0\big]>0$ implies directly $k_0[r;b]>0$, by~\eqref{ineqk0} and~\eqref{eq:limit}.

Lastly, let $\psi\in\mcc^{2,\alpha}(\R)$ be a $1$-periodic positive principal eigenfunction of $\mathcal{T}$, that is, $\mathcal{T}\psi=k_0\big[r-\frac{b'}{2}-\frac{b^2}{4};0\big]\,\psi$ in $\R$. By integrating this equation against $1/\psi$ over $[0,1]$, one gets that
$$k_0\Big[r-\frac{b'}{2}-\frac{b^2}{4};0\Big]=\int_0^1\frac{(\psi')^2}{\psi^2}+\int_0^1\Big(r-\frac{b'}{2}-\frac{b^2}{4}\Big)\ge\int_0^1r-\frac14\int_0^1b^2.$$
Therefore, if $\displaystyle 4\int_0^1r>\int_0^1b^2$, then $k_0\big[r-\frac{b'}{2}-\frac{b^2}{4};0\big]>0$.\end{proof}


\section{Construction of the principal eigenfunctions in item~$(ii)$ of Theorem~\ref{thm:equality}}\label{s:construction_phi}

In the proof of item~$(ii)$ of Theorem~\ref{thm:equality}, in order to show that $k_0[r;b]=k_0[r;-b]$ when $r=\beta+\gamma b$, $\beta\in\R$, $\gamma\in\R^*$, and $b\in\mcc^{0,\alpha}(\R)$ is $1$-periodic, we gave in~\eqref{defvarphi} an explicit expression for a principal $1$-periodic eigenfunction $\varphi$ solving~\eqref{eq:varphi} and associated with $\mcl_{0}[r;-b]$, using a principal $1$-periodic eigenfunction $\phi$ solving~\eqref{eq:phi+} and associated with $\mcl_{0}[r;b]$. As this expression is somewhat complicated, we explain its origin in this subsection. We assume for conciseness that $\beta=0$, so $r=\gamma b$.

For the moment, we do not consider the periodicity conditions and simply look for the form of the general solution of \eqref{eq:varphi} in $\R$, or equivalently, the general solution $\mu\in\mcc^2(\R)$ of $\T[\mu]=0$ in $\R$, with
$$\T : \mu \mapsto \mu'' -  b\mu' + (\gamma b - k_0[\gamma b ;b]) \mu.$$ 

We begin with the construction of a particular solution $y_1\in\mcc^2(\R)$ of $\T[y_1]=0$ in $\R$, which we seek in the form $y_1=m\,e^B$, with $B'=b$, $B(0)=0$, and with a function $m\in\mcc^2(\R)$ to be determined. With this ansatz, we have 
$$\T[y_1]=e^B\big(m'' + b m' +(\gamma b +b' - k_0[\gamma b ;b] ) m \big).$$
Setting a new operator
$$\widetilde \T : \mu \mapsto \mu'' + b \mu' + (\gamma b +b' - k_0[\gamma b ;b]) \mu,$$
acting on $\mcc^2(\R)$ functions $\mu$, we observe that $\widetilde \T [\phi]= b' \phi$. Furthermore, since $b$ is at least of class $\mcc^1(\R)$ and $\phi$ is at least of class $\mcc^2(\R)$, it follows from~\eqref{eq:phi+} that $\phi$ is actually at least of class $\mcc^3(\R)$. Hence, differentiating~\eqref{eq:phi+}, we get
$\widetilde \T [\phi']=-\gamma \, b' \, \phi$. As a consequence, by choosing
$$m:=\phi'+\gamma \, \phi\ \in\mcc^2(\R),$$
we have $\T[y_1]=e^B\widetilde \T[m]=0$ in $\R$. Using Lemma~\ref{lem:sign_phi'+phi}, as in the beginning of the proof of item~$(ii)$ of Theorem~\ref{thm:equality} in Section~\ref{s:proof_equality}, we note that $m$ has a constant strict sign in~$\R$. Therefore, since
$$b=\frac{k_0[\gamma b;b] \phi -\phi''}{m}$$
by~\eqref{eq:phi+}, it follows that the solution $y_1=me^B$ of $\T[y_1]=0$ can be rewritten as
\begin{align*}
  y_1(x)& = m(x)\exp \lp \int_0^x  b(s) \, ds \rp\\
  & = m(0)\exp \lp \int_0^x  \Big(b(s) + \frac{m'(s)}{m(s)}\Big)\, ds \rp = m(0) \exp(g(x)),  
\end{align*}
with
$$g(x) := \int_0^x \frac{\gamma\, \phi'(s) + k_0[\gamma \, b ;b] \phi(s)}{\phi'(s) + \gamma\, \phi(s)} \, ds.$$

Now, with this particular strictly signed solution $y_1=m(0)\,e^g$ of $\T[y_1]=0$ in hand, still forgetting the periodicity properties (actually, the function $y_1$ is not periodic if the mean of $b$ is not zero), the general solution $\mu$ of 
$\T[\mu]=0$ in $\R$, can be written from \cite[Section 2.2.2]{PolZai17} as
\[\mu(x) = y_1(x) \left( C_1 + C_2 \int_0^x \frac{e^{B(z)}}{y_1^2(z)}\, dz \right)\!,
\]
where $C_1$, $C_2$ are two constants. Since
$$\frac{e^B}{y_1}=\frac{1}{m}=\frac{1}{\phi'+\gamma\phi}\ \hbox{ and }\ y_1=m(0)\,e^g$$
with $m(0)\neq0$, up to a modification of the constants $C_1$ and $C_2$, we can then write the general solution $\mu\in\mcc^2(\R)$ of $\T[\mu]=0$ in $\R$ as:
\[
\mu(x) = e^{g(x)} \left( C_1 + C_2 \int_0^x \frac{e^{-g(z)}}{\phi'(z) + \gamma\, \phi(z)}\, dz \right)=e^{g(x)}\,\big(C_1+C_2h(x)\big),
\]
where
\[h(x):= \int_0^x \frac{e^{-g(z)}}{\phi'(z) + \gamma\, \phi(z)}\, dz=\int_0^x \frac{e^{-g(z)}}{m(z)}\, dz.\]

Finally, observing that $h(1)\neq0$ (because $m$ has a constant strict sign in $\R$), and setting
$$C_1:=1\ \hbox{ and }C_2:=\frac{e^{-g(1)}-1}{h(1)},$$
we obtain a particular solution $\mu$ of $\T[\mu]=0$ in $\R$, with $\mu(0)=\mu(1)=1$. Furthermore, the function $C_1+C_2h=\mu\,e^{-g}$ is monotone in $\R$, and takes positive values at~$0$ and~$1$, whence $\mu$ is positive in $[0,1]$. Lastly, as in the proof of item~$(ii)$ of Theorem~\ref{thm:equality} in Section~\ref{s:proof_equality}, we can check that $\mu'(0)=\mu'(1)$. As a consequence, with that choice of constants $(C_1,C_2)$, the function $\varphi:=\mu\in\mcc^2(\R)$ is a $1$-periodic positive solution of~\eqref{eq:varphi}.

\section*{Acknowledgments} 
N.B., F.H. and L.R. were supported by the ANR project ReaCh, {ANR-23-CE40-0023-01}.
N.B. was supported by the Chaire Modélisation Mathématique et Biodiversité (École Polytechnique, Muséum national d’Histoire naturelle, Fondation de l’École Polytechnique, VEOLIA Environnement). 

\bibliographystyle{plain}
\footnotesize{\bibliography{biblio_clean}}

\end{document}